\documentclass{birkjour}
\usepackage[noadjust]{cite}
\usepackage{subcaption} 
\usepackage{hyperref}
\usepackage[dvipsnames]{xcolor}
\newtheorem{thm}{Theorem}[section]
\newtheorem{cor}[thm]{Corollary}
\newtheorem{lem}[thm]{Lemma}
\newtheorem{prop}[thm]{Proposition}
\theoremstyle{definition}
\newtheorem{defn}[thm]{Definition}
\theoremstyle{remark}
\newtheorem{rem}[thm]{Remark}

\newtheorem*{ex}{Example}
\numberwithin{equation}{section}
\newcommand{\BibTeX}{\textsc{B\kern-0.1emi\kern-0.017emb}\kern-0.15em\TeX}
\newcommand{\ed}{\end{document}}
\begin{document}
\title[Constructing $3$-Dimensional Monogenic Homogeneous Functions]{Constructing $3$-Dimensional Monogenic Homogeneous Functions}
\author[H. Baghal Ghaffari]{Hamed Baghal Ghaffari}
\address{School of Mathematical and Physical Sciences, \\University of Newcastle, \\Callaghan, NSW 2308, \\Australia}
\email{hamed.baghalghaffari@newcastle.edu.au}
\author[J.A. Hogan]{Jeffrey A. Hogan}
\address{School of Mathematical and Physical Sciences, \\University of Newcastle, \\Callaghan, NSW 2308, \\Australia}
\email{jeff.hogan@newcastle.edu.au}
\author[J.D. Lakey]{Joseph D. Lakey}
\address{Department of Mathematical Sciences,\\New Mexico State University, \\
Las Cruces, NM 88003–8001, USA}
\email{jlakey@nmsu.edu}

\subjclass{Primary 15A67, Secondary 15A66}
\keywords{Spherical Monogenics, Gradient Descent, Reproducing Kernel}
\date{\today}
\begin{abstract}
This paper is dedicated to the construction of multidimensional spherical monogenics. Firstly, we investigate the construction of monogenic functions in dimension $3$ by applying the Dirac operator to the orthonormal bases of spherical harmonics, resulting in orthogonal spherical monogenics. Additionally, we employ the reproducing kernel for monogenic functions and a specialized optimization method to derive various types of $3$-dimensional spherical harmonics and spherical monogenics.
\end{abstract}

\bigskip
\maketitle
\section{Introduction}
Clifford prolate spheroidal wave functions (CPSWFs), initially introduced in \cite{ghaffari2019clifford, ghaffari2022higher}, have shown to possess remarkable properties. These functions, generated as eigenfunctions of the extended Clifford-Legendre differential equations, exhibit a fascinating connection as eigenfunctions of the finite Fourier transformation across all dimensions. To explicitly determine the CPSWFs, it becomes essential to establish a clear basis of spherical monogenics. Therefore, this paper aims to construct a robust basis of spherical monogenics in dimension $3$.

This paper is structured as follows: In Section 2, we provide a comprehensive review of pertinent concepts in Clifford analysis. Moving to Section 3, we construct an orthonormal basis of spherical monogenics in $\mathbb{R}^{3}$ by employing the Dirac operator on an orthonormal basis of spherical harmonics. This approach draws inspiration from the works of \cite{caccao2004complete}, \cite{bock2010generalized}, and \cite{lavivcka2012complete}. In section 4, we use knowledge of the reproducing kernel for the spherical monogenics of degree $k$, optimization on the sphere and Clifford analysis techniques to construct an orthonormal basis of near-zonal spherical monogenics of degree $k$.

\section{Preliminaries}
Let
$\mathbb{R}^{m}$
be 
$m$-dimensional 
euclidean space and let
$\{e_{1},e_{2},\dots e_{m}\}$
be an orthonormal basis for
$\mathbb{R}^{m}.$
We endow these vectors with the multiplicative properties
\begin{align*}
	e_{j}^{2}&=-1,\; \; j=1,\dots , m,\\
	e_{j}e_{i}&=-e_{i}e_{j}, \;\; i\neq j, \;\; i,j=1,\dots , m.
\end{align*}
For any subset
$A=\{j_{1},j_{2},\dots, j_{h}\}\subseteq \{1,\dots ,	m\}=M,$ with $j_1<j_2<\cdots <j_h$
we consider the formal product
$e_{A}=e_{j_{1}}e_{j_{2}}\dots e_{j_{h}}.$
Moreover for the empty set
$\emptyset$
one puts
$e_{\emptyset}=1$ (the identity e{lem}ent). The Clifford algebra ${\mathbb R}_m$ is then the $2^m$-dimensional algebra 
$${\mathbb R}_m=\bigg\{\sum\limits_{A\subset M}\lambda_Ae_A:\, \lambda_A\in{\mathbb R}\bigg\}.$$
Every e{lem}ent $\lambda=\sum\limits_{A\subset M}\lambda_Ae_A\in{\mathbb R}_m$ may be decomposed as  
$\lambda=\sum\limits_{k=0}^{m}[\lambda]_{k},$
where 
$[\lambda]_{k}=\sum\limits_{\vert A\vert=k}\lambda_{A}e_{A}$
is the so-called 
$k$-vector
part of 
$\lambda\, (k=0,1,\dots ,m)$. Denoting by 
$\mathbb{R}_{m}^{k}$
the subspace of all 
$k$-vectors
in
$\mathbb{R}_{m},$
i.e., the image of 
$\mathbb{R}_{m}$
under the projection operator 
$[\cdot]_{k},$
one has the multi-vector decomposition
$\mathbb{R}_{m}=\mathbb{R}_{m}^{0}\oplus \mathbb{R}_{m}^{1}\oplus\cdots \oplus \mathbb{R}_{m}^{m},$
leading  to the identification of
$\mathbb{R}$
with the subspace of real scalars
$\mathbb{R}_{m}^{0}$
and of
$\mathbb{R}^{m}$
with the subspace of real Clifford vectors 
$\mathbb{R}_{m}^{1}.$ The latter identification is achieved by identifying the point
$(x_{1},\dots,x_{m})\in{\mathbb R}^m$
with the Clifford number
$x=\sum\limits_{j=1}^{m}e_{j}x_{j}\in{\mathbb R}_m^1$. Let $\mathbb{R}_{m}^{+}$ as the even subalgebra of $\mathbb{R}_{m}$, i.e., $\mathbb{R}_{m}^{+}=\bigg\{\sum\limits_{\vert A\vert\; even}\lambda_Ae_A:\, \lambda_A\in{\mathbb R} \bigg\}$.
The Clifford number 
$e_{M}=e_{1}e_{2}\cdots e_{m}$
is called the pseudoscalar; depending on the dimension 
$m,$
the pseudoscalar commutes or anti-commutes with the 
$k$-vectors
and squares to 
$\pm 1.$
The Hermitian conjugation is the real linear mapping $\lambda\mapsto\bar{\lambda}$ of ${\mathbb R}_m$ to itself satisfying
\begin{align*}
	\overline{\lambda \mu}&=\bar{\mu}\bar{\lambda},\;\;\;\; \textnormal{for all}\;\lambda,\mu\in\mathbb{R}_{m}\\
	\overline{\lambda\, e_{A}}&=\lambda\, \overline{e_{A}},\;\;\; \lambda\in\mathbb{R},\\
	\overline{e_{j}}&=-e_{j},\;\; j, \;\; j=1,\ldots , m.
\end{align*}
The Hermitian conjugation leads to a Hermitian inner product and its associated norm on 
$\mathbb{R}_{m}$
given respectively by
$$(\lambda, \mu)=[\bar{\lambda}\mu]_{0}\;\;\;\textnormal{and}\;\;\; \vert\lambda\vert^{2}=[\bar{\lambda}\lambda]_{0}=\sum\limits_{A}\vert\lambda_{A}\vert^{2}.$$
The product of two vectors splits up into a scalar part and a 2-vector, also called a bivector:
$$xy=-\langle x, y\rangle +x\wedge y$$
where
$\langle x,y\rangle=-\sum\limits_{j=1}^{m}x_{j}y_{j}\in \mathbb{R}^{0}_{m}$,
and
$x\wedge y=\sum\limits_{i=1}^{m}\sum\limits_{j=i+1}^{m}e_{i}e_{j}(x_{i}y_{j}-x_{j}y_{i})\in\mathbb{R}^{2}_{m}$.
Note that the square of a vector variable 
$x$
is scalar-valued. In fact, if $x\in \mathbb{R}_{m}^{1}$ then
$$x^{2}=-\langle x,x\rangle=-\vert x\vert^{2}.$$
By diect calculations we see that if $z\in \mathbb{R}_{3}$ then $\bar{z} z =\vert z\vert^{2}$.

Clifford analysis provides a function theory that serves as a higher-dimensional analogue of the theory of holomorphic functions in complex analysis. In this framework, functions are defined on the Euclidean space $\mathbb{R}^{m}$ and take values in the Clifford algebra $\mathbb{R}_{m}$. 

The central notion in Clifford analysis is monogenicity, which is a multidimensional counterpart of holomorphy in the complex plane. 
Let $\Omega\subset{\mathbb R}^m$, $f:\Omega\to{\mathbb R}_m$ and $n$ a non-negative integer. We say $f\in C^n(\Omega,{\mathbb R}_m )$ if $f$ and all its partial derivatives of order less than or equal to $n$ are continuous.
\begin{defn} Let $\Omega\subset{\mathbb R}^m$. 
	A function 
	$f\in C^1(\Omega ,{\mathbb R}_m)$
	is said to be left monogenic in that region if 
	$$\partial_{x}f=0.$$
	Here 
	$\partial_{x}$
	is the Dirac operator in
	$\mathbb{R}^{m}$, i.e.,
	$\partial_{x}f=\sum\limits_{j=1}^{m}e_{j}\partial_{x_{j}}f$,
	where
	$\partial_{x_{j}}$
	is the partial derivative
	$\dfrac{\partial}{\partial x_{j}}$.
	The notion of right monogenicity is defined in a similar way by letting the Dirac operator act from the right. It is easily seen that if a Clifford algebra-valued function 
	$f$
	is left monogenic, its Hermitian conjugate 
	$\bar{f}$
	is right monogenic. The Euler operator is defined on  
	$C^1(\Omega ,\mathbb{R}_{m})$
	by
	$E=\sum\limits_{j=1}^{m}x_{j}\partial_{x_{j}}$.
	If $k$ is a non-negative integer and $f\in C^1({\mathbb R}^m\setminus\{0\},{\mathbb R}_m)$ is homogeneous of degree $k$ (i.e., $f(\lambda x)=\lambda^kf(x)$ for all $\lambda >0$ and $x\in{\mathbb R}^m$) then $Ef=kf$.
	The Laplace operator is factorized by the Dirac operator as follows:
	\begin{equation}
		\Delta_{m}=-\partial_{x}^{2}.
	\end{equation}
\end{defn}

\begin{defn}\label{left monogenic homogeneous polynomial}
	A left monogenic homogeneous polynomial
	$Y_{k}$
	of degree 
	$k\; (k\geq 0)$
	on
	$\mathbb{R}^{m}$
	is called a left solid inner spherical monogenic of order 
	$k.$
	The right Clifford module of all left solid inner spherical monogenics of order 
	$k$
	will be denoted by
	$\mathcal{M}_{\ell}^{+}(k).$
	It can be shown 
	\cite{delanghe2012clifford}
	that the dimension of 
	$\mathcal{M}_{\ell}^{+}(k)$
	is given by 
	\begin{equation}\label{dimension_monogenic}
		\dim\mathcal{M}_{\ell}^{+}(k)=\frac{(m+k-2)!}{(m-2)!k!}=\binom{m+k-2}{k}=d_{k}^{m}.
	\end{equation}
	We may choose an orthonormal basis for each 
	$\mathcal{M}_{\ell}^{+}(k)$, $(k\geq 0)$ i.e., a collection $\{Y_{k}^{j}\}_{j=1}^{d_{k}^{m}}$ which spans $\mathcal{M}_{\ell}^+(k)$ and for which 
	$$\int\limits_{S^{m-1}}\overline{Y_{k}^{j}(\theta)}Y_{k}^{j'}(\theta)d\theta=\delta_{jj'}.$$
\end{defn}

We will need also the following Lemma which is easily to obtain by direct calculation.

\begin{thm}(\textbf{Clifford-Stokes Theorem} \cite{delanghe2012clifford})  \label{Clifford-Stokes theorem} 
	Let $\Omega\subset{\mathbb R}^m$
	$f,g\in {C}^{1}(\Omega),$
	and assume that 
	$C$
	is a compact orientable 
	$m-$dimensional manifold with boundary
	$\partial C$.
	Then for each
	$C\subset \Omega,$
	one has
	$$\int\limits_{\partial C}f(x)n(x)g(x)d\sigma(x)=\int\limits_{C}[(f(x)\partial_{x})g(x)+f(x)(\partial_{x}g(x))]dx,$$
	where 
	$n(x)$
	is the outward-pointing unit normal vector on
	$\partial C.$
\end{thm}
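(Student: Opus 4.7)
The plan is to reduce the identity to the classical divergence theorem applied to a Clifford-valued vector field. The starting observation is that the unit outward normal decomposes as $n(x)=\sum_{j=1}^{m}n_{j}(x)\,e_{j}$ with real-valued components $n_{j}$. Since $n_{j}(x)\in\mathbb{R}$ commutes with every element of $\mathbb{R}_{m}$, I can pull it out of the Clifford product and write
$$\int_{\partial C}f(x)\,n(x)\,g(x)\,d\sigma(x)=\sum_{j=1}^{m}\int_{\partial C}\bigl(f(x)\,e_{j}\,g(x)\bigr)n_{j}(x)\,d\sigma(x).$$
For each fixed $j$, the integrand $f(x)e_{j}g(x)$ is a $C^{1}$ function on $\Omega$ with values in $\mathbb{R}_{m}$. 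Expanding it in the basis $\{e_{A}\}_{A\subset M}$ reduces the problem to finitely many real-valued applications of the classical divergence theorem, namely $\int_{\partial C}h\,n_{j}\,d\sigma=\int_{C}\partial_{x_{j}}h\,dx$ for $h\in C^{1}(\Omega,\mathbb{R})$.

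Applying this componentwise (and then recombining) gives
$$\int_{\partial C}\bigl(f\,e_{j}\,g\bigr)n_{j}\,d\sigma=\int_{C}\partial_{x_{j}}\bigl(f(x)\,e_{j}\,g(x)\bigr)dx.$$
Since $e_{j}$ is a constant Clifford element, the ordinary Leibniz rule yields
$$\partial_{x_{j}}\bigl(f\,e_{j}\,g\bigr)=(\partial_{x_{j}}f)\,e_{j}\,g+f\,e_{j}\,(\partial_{x_{j}}g),$$
where the order of the factors must be carefully preserved because the Clifford product is non-commutative.

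Finally, I sum on $j$ and re-assemble the Dirac operator. Using the definitions $\partial_{x}g=\sum_{j}e_{j}\,\partial_{x_{j}}g$ (left action) and $f\,\partial_{x}=\sum_{j}(\partial_{x_{j}}f)\,e_{j}$ (right action), the right-hand side collapses to
$$\int_{C}\bigl[(f\,\partial_{x})\,g+f\,(\partial_{x}g)\bigr]dx,$$
which matches the claimed formula. The only real subtlety, rather than a serious obstacle, is maintaining the correct left/right placement of factors throughout; the non-commutativity forces us to keep $e_{j}$ sandwiched between $f$ and $g$ at every step so that the two terms produced by the Leibniz rule genuinely reassemble into the right action on $f$ and the left action on $g$ respectively. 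No analytic machinery beyond the classical divergence theorem is required, since all manipulations are componentwise in the $2^{m}$-dimensional basis $\{e_{A}\}$ of $\mathbb{R}_{m}$.
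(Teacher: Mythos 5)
The paper does not actually prove this theorem; it is stated as a known result with a citation to \cite{delanghe2012clifford}, so there is no paper proof to compare against. Your argument is the standard one and it is correct: decompose $n(x)=\sum_j n_j(x)e_j$ with real scalar components, pull the scalars out, expand $f e_j g$ in the finite real basis $\{e_A\}$ of $\mathbb{R}_m$, apply the classical (real) divergence theorem componentwise, use the Leibniz rule while respecting the non-commutativity (keeping $e_j$ fixed between $f$ and $g$), and sum over $j$ to reassemble $(f\partial_x)g + f(\partial_x g)$. This is precisely how the result is established in the cited reference, so your approach is not only correct but the canonical one.

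Two minor points worth tightening if this were to be included as a full proof. First, the componentwise divergence theorem $\int_{\partial C} h\,n_j\,d\sigma = \int_C \partial_{x_j}h\,dx$ requires that $C$ be a compact $m$-manifold with boundary embedded in $\Omega$ with $C^1$ (or at least piecewise-$C^1$) boundary; you should make explicit that the hypotheses of the theorem guarantee this. Second, when you write $\partial_{x_j}(f e_j g) = (\partial_{x_j}f)e_j g + f e_j(\partial_{x_j}g)$ you are implicitly using the fact that the real-linear structure lets the Leibniz rule act on each scalar component $[\,f e_j g\,]_A$ and then reassemble; this is fine, but stating it avoids any worry about applying a product rule directly to a non-commutative product. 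Neither of these is a gap, just bookkeeping.
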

The following theorems give us the reproducing kernel of spherical harmonic and monogenic functions. Let $\mathcal{H}^{+}(k)$ be the space of homogeneous harmonic polynomials of degree $k$ on $\mathbb{R}^{m}$. Then $h_{k}^{+}=\dim (\mathcal{H}^{+}(k))=\frac{2k+m-2}{k+m-2}\binom{k+m-2}{m-2}$ is the dimension of $\mathcal{H}^{+}(k)$. Let $\{ H_{k}^{i} \}_{i=1}^{h_{k}^{m}}$ be any orthonormal basis for $\mathcal{H}^{+}(k)$.

\begin{thm}
		The function
	\begin{equation}\label{Repruducing_Kernel_of_harmonic_Functions}
		R_{k}^{m}(x,y)=\sum_{i=1}^{h_{m}^{k}} H_{k}^{i}(x)\overline{H_{k}^{i}(y)}=\frac{(2k+m-2)}{(m-2)} C_{k}^{\mu}( \langle x,y \rangle ).
	\end{equation}
	is the reproducing kernel for $\mathcal{H}^{+}(k)$ in the sense that $\int\limits_{S^{m-1}}R_{k}^{m}(x,y) H_{k}(y) d\sigma(y)=H_{k}(x)$ for all $H_{k}\in \mathcal{H}^{+}(k)$ and $x\in S^{m-1}$.
\end{thm}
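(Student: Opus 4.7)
The plan is to establish the two equalities in (\ref{Repruducing_Kernel_of_harmonic_Functions}) in two stages: first, show that the middle expression $\sum_{i=1}^{h_k^m} H_k^i(x)\overline{H_k^i(y)}$ reproduces elements of $\mathcal{H}^{+}(k)$, and second, identify this sum with the claimed Gegenbauer polynomial.

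For the reproducing property, I would start by expanding an arbitrary $H_k \in \mathcal{H}^{+}(k)$ in the basis as $H_k = \sum_i \alpha_i H_k^i$ with coefficients $\alpha_i = \int_{S^{m-1}}\overline{H_k^i(y)}H_k(y)\,d\sigma(y)$. Substituting this expansion into $\int_{S^{m-1}} R_k^m(x,y) H_k(y)\,d\sigma(y)$ and using Fubini together with the orthonormality of $\{H_k^i\}$ collapses the sum to $\sum_i \alpha_i H_k^i(x)=H_k(x)$. A short change-of-basis argument, using the unitarity of the transition matrix between two orthonormal bases of $\mathcal{H}^{+}(k)$, shows that the double sum is independent of the particular basis chosen.

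For the second equality, I would exploit rotation invariance. Given any $U \in SO(m)$, the collection $\{H_k^i \circ U^{-1}\}$ is again an orthonormal basis of $\mathcal{H}^{+}(k)$, so by basis-independence $R_k^m(Ux,Uy)=R_k^m(x,y)$. When $x,y\in S^{m-1}$, this forces $R_k^m$ to depend only on the scalar $\langle x,y\rangle$, so $R_k^m(x,y)=F_k(\langle x,y\rangle)$ for some polynomial $F_k$ of degree $k$ (the degree bound coming from homogeneity of the $H_k^i$). With $y$ fixed as a pole, $x\mapsto F_k(\langle x,y\rangle)$ is a zonal element of $\mathcal{H}^{+}(k)$, and the classical theory of zonal spherical harmonics (via the Funk--Hecke formula or the Rodrigues formula for ultraspherical polynomials) identifies such polynomials, up to a multiplicative constant, with $C_k^{\mu}(\langle x,y\rangle)$ for $\mu=(m-2)/2$.

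The final step is to pin down the constant $(2k+m-2)/(m-2)$. I would take traces by setting $x=y\in S^{m-1}$ and integrating: the middle expression integrates to $\sum_i \int_{S^{m-1}} |H_k^i(x)|^2\,d\sigma(x)=h_k^m$, while the right-hand side gives a constant multiple of $C_k^{\mu}(1)=\binom{k+2\mu-1}{k}=\binom{k+m-3}{k}$. Solving for the constant and simplifying using the explicit formula for $h_k^m$ yields exactly $(2k+m-2)/(m-2)$. The main obstacle is the identification of the zonal harmonic as a Gegenbauer polynomial and the careful bookkeeping of the normalization; everything else is a direct application of orthonormality and rotational symmetry.
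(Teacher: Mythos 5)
The paper itself supplies no proof of this theorem: it is stated as a known result, and the analogous monogenic kernel result (Theorem~\ref{reproducing_kernel_equation}) is proven only by citation. Your sketch is therefore standing on its own, and it is the standard textbook argument: the reproducing property is an immediate consequence of orthonormality and Fubini; basis-independence follows from unitarity of the change-of-basis matrix; simultaneous $SO(m)$-invariance forces dependence on $\langle x,y\rangle$ alone; one-dimensionality of the space of zonal harmonics with a fixed pole pins down the kernel up to a scalar multiple of $C_k^\mu(\langle x,y\rangle)$ with $\mu=(m-2)/2$; and the trace computation at $x=y$, using $C_k^\mu(1)=\binom{k+m-3}{k}$ and the dimension formula $h_k^m=\frac{2k+m-2}{k+m-2}\binom{k+m-2}{m-2}$, yields exactly the constant $(2k+m-2)/(m-2)$. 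The one point worth flagging is that your normalisation step implicitly assumes the surface measure on $S^{m-1}$ is normalised so that $|S^{m-1}|=1$; otherwise the constant picks up a factor of $|S^{m-1}|^{-1}$. The paper's stated formula is only consistent under that same convention, so you and the paper agree, but it would be worth stating the convention explicitly. Beyond that, the only soft spot is that you cite ``classical theory'' for the identification of the zonal harmonic with a Gegenbauer polynomial rather than proving it, but that is a reasonable thing to take as known, and it is exactly the content that a written-out proof would need to import from a reference in any case.
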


\begin{thm}\label{reproducing_kernel_equation}
	Let $\{ Y_{k}^{i} \}_{i=1}^{d_{k}^{m}}$ be any orthonormal basis for $\mathcal{M}_{\ell}^{+}(k)$ and
	\begin{equation}\label{Repruducing_Kernel_of_Monogenic_Functions}
		K_{k}^{m}((x,y)=\sum_{i=1}^{d_{k}^{m}} Y_{k}^{i}(x)\overline{Y_{k}^{i}(y)}=\frac{(k+m-2)}{(m-2)} C_{k}^{\mu}(\frac{\langle x,y \rangle}{\vert x\vert \vert y\vert})+(x\wedge y) C_{k-1}^{\mu+1}(\frac{\langle x,y \rangle}{\vert x\vert \vert y\vert}),
	\end{equation}
where $C_{k}^{\mu}(t)$ is the Gegenbauer polynomials defined on the line with $\mu=\frac{m}{2}-1$. Then $K_{k}^{m}$ is the reproducing kernel for $\mathcal{M}_{\ell}^{+}(k)$ in the sense that $\int\limits_{S^{m-1}}K_{k}^{m}(x,y) Y_{k}(y) d\sigma(y)=Y_{k}(x)$ for all $Y_{k}\in \mathcal{M}_{\ell}^{+}(k)$ and $x\in S^{m-1}$.
\end{thm}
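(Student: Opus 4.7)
The approach is to separate the statement into (a) the reproducing property of the sum $K_k^m(x,y) := \sum_i Y_k^i(x)\overline{Y_k^i(y)}$, and (b) the closed-form Gegenbauer identity. Part (a) is a routine consequence of the right-module structure. Writing $Y_k \in \mathcal{M}_\ell^+(k)$ as $Y_k = \sum_j Y_k^j c_j$ with Clifford coefficients $c_j$, linearity of the integral together with the orthonormality in Definition~\ref{left monogenic homogeneous polynomial} gives
$$\int_{S^{m-1}} K_k^m(x,y)\, Y_k(y)\, d\sigma(y) = \sum_{i,j} Y_k^i(x)\Big(\int_{S^{m-1}} \overline{Y_k^i(y)}\, Y_k^j(y)\, d\sigma(y)\Big) c_j = \sum_i Y_k^i(x)\, c_i = Y_k(x).$$

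For part (b), I would first verify that $K_k^m$ is independent of the chosen orthonormal basis via the standard change-of-basis argument. This independence, combined with the left monogenicity and degree-$k$ homogeneity of each $Y_k^i$, forces $K_k^m(x,y)$ to be left monogenic in $x$, right anti-monogenic in $y$, homogeneous of degree $k$ in each variable separately, and invariant under the natural simultaneous rotation action on $(x,y)$. The only Clifford-valued objects with these symmetries are scalar polynomials in $t := \langle x,y\rangle/(|x|\,|y|)$ together with bivector multiples of $x\wedge y$ with scalar coefficients in $t$; homogeneity then narrows the form to the ansatz
$$K_k^m(x,y) = A(t) + (x \wedge y)\, B(t),$$
with $A$, $B$ real polynomials of degrees at most $k$ and $k-1$ respectively (understood with the appropriate homogeneous factors on $S^{m-1}\times S^{m-1}$).

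To pin down $A$ and $B$, the cleanest tool is the Fischer decomposition $\mathcal{H}^+(k) = \mathcal{M}_\ell^+(k) \oplus \bar{x}\,\mathcal{M}_\ell^+(k-1)$. Assembling an orthonormal basis of $\mathcal{H}^+(k)$ from orthonormal bases of $\mathcal{M}_\ell^+(k)$ and $\mathcal{M}_\ell^+(k-1)$ (with the appropriate normalising constant $c_k$ for the $\bar{x}$-multiplication factor) yields a decomposition of the harmonic reproducing kernel of the schematic form $R_k^m(x,y) = K_k^m(x,y) + c_k\, \bar{x}\, K_{k-1}^m(x,y)\, y$ on $S^{m-1}\times S^{m-1}$. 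Comparison with \eqref{Repruducing_Kernel_of_harmonic_Functions}, together with the Gegenbauer derivative identity $\frac{d}{dt}C_k^\mu(t) = 2\mu\, C_{k-1}^{\mu+1}(t)$ (which naturally shifts indices between the scalar and bivector pieces), identifies $A = \frac{k+m-2}{m-2}C_k^\mu$ and $B = C_{k-1}^{\mu+1}$. The normalisation constant can be double-checked by setting $x=y$ on $S^{m-1}$ and matching $\sum_i |Y_k^i(x)|^2$ against the dimension $d_k^m$ from \eqref{dimension_monogenic} via $C_k^\mu(1) = \binom{k+2\mu-1}{k}$.

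The main obstacle is the non-commutativity of the Clifford product. Both the Fischer route (computing $\bar{x}\,K_{k-1}^m(x,y)\,y$ and cleanly separating its scalar from its bivector part) and the direct PDE alternative (substituting the ansatz into $\partial_x K_k^m = 0$ and extracting a coupled recurrence for $A$ and $B$) require careful bookkeeping of scalar versus bivector contributions. It is precisely this separation that forces the two Gegenbauer coefficients to carry the distinct indices $C_k^\mu$ and $C_{k-1}^{\mu+1}$ rather than collapsing to a single-term kernel as in the harmonic case.
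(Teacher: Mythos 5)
The paper itself contains no proof of this theorem; it simply cites Theorem 3.3 of \cite{de2016reproducing}, so there is no internal argument to compare against.

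Your part (a) is correct and complete: the right-module structure of $\mathcal{M}_\ell^+(k)$ lets you expand $Y_k=\sum_j Y_k^j c_j$ with Clifford coefficients on the right, $Y_k^i(x)$ pulls out of the integral on the left and $c_j$ on the right, and orthonormality collapses the double sum. Your part (b), however, is an outline rather than a proof. The reduction via Spin-covariance and basis-independence to the zonal ansatz $K_k^m(x,y)=A(t)+(x\wedge y)B(t)$ is sound, and the dimension count $h_k^m=d_k^m+d_{k-1}^m$ confirms the Fischer decomposition $\mathcal{H}^{+}(k)=\mathcal{M}_\ell^+(k)\oplus x\,\mathcal{M}_\ell^+(k-1)$ that underlies your proposed comparison with $R_k^m$. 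But the identification $A=\frac{k+m-2}{m-2}C_k^\mu$, $B=C_{k-1}^{\mu+1}$ is asserted, not derived. The step that actually carries the content of the theorem --- expanding $x\,K_{k-1}^m(x,y)\,y$, separating its scalar from its bivector part while checking that the grade-$4$ piece cancels, computing the normalisation $c_k$, and then matching against the Gegenbauer expansion of $R_k^m$ via the identity $\frac{d}{dt}C_k^\mu=2\mu\,C_{k-1}^{\mu+1}$ --- is precisely what you defer as ``careful bookkeeping'' and never carry out. As written, the proposal names the right route and the right obstacles but leaves the crucial coefficient-matching unexecuted; it is a credible plan, not a proof of the closed-form identity.
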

\begin{proof}
	For the proof see \cite{de2016reproducing} Theorem 3.3.
\end{proof}
\begin{lem}
The Dirac operator decomposes as
\begin{equation}\label{dirac_operator_polar_form_gamma}
	\partial_{x}f=\frac{x}{\vert x\vert}\big(\partial_{r}+\frac{1}{r}\Gamma\big)f,
\end{equation}
where $\Gamma$ is the spherical Dirac operator given by
\begin{equation}\label{Definition_Gamma}
	\Gamma = -\sum_{i<j}e_{ij}\bigg(x_{i}\frac{\partial}{\partial x_{j}}-x_{j}\frac{\partial}{\partial x_{i}}\bigg).
\end{equation}
\end{lem}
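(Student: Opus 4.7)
The plan is to reduce the claim to the algebraic identity
\begin{equation*}
-x\,\partial_x = E + \Gamma,
\end{equation*}
acting on $C^1$ functions, where $E=\sum_j x_j\partial_{x_j}$ is the Euler operator. Once this is established, I would divide both sides by $-x$ (noting that since $x\in\mathbb{R}_m^1$ we have $x^2=-|x|^2=-r^2$, so $x^{-1}=-x/r^2$) to obtain
\begin{equation*}
\partial_x f = \frac{x}{r^2}(E+\Gamma)f = \frac{x}{|x|}\Big(\frac{1}{r}E+\frac{1}{r}\Gamma\Big)f.
\end{equation*}
A standard chain-rule computation in polar coordinates gives $\partial_r = \sum_j (x_j/r)\partial_{x_j} = E/r$, i.e.\ $E=r\partial_r$, which yields the claimed formula.

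To prove $-x\,\partial_x = E+\Gamma$, I would expand
\begin{equation*}
-x\,\partial_x = -\sum_{i,j=1}^m e_i e_j\, x_i\,\partial_{x_j}
\end{equation*}
and separate the sum into diagonal ($i=j$) and off-diagonal ($i\neq j$) contributions. Using $e_i^2=-1$, the diagonal terms collapse to $\sum_i x_i\partial_{x_i}=E$. For the off-diagonal terms I would pair index $(i,j)$ with $(j,i)$ and apply the anticommutation relation $e_j e_i=-e_i e_j$, so that
\begin{equation*}
-\sum_{i\neq j}e_i e_j\, x_i\,\partial_{x_j} = -\sum_{i<j} e_{ij}\bigl(x_i\partial_{x_j}-x_j\partial_{x_i}\bigr) = \Gamma,
\end{equation*}
matching the definition~\eqref{Definition_Gamma}.

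This is essentially a bookkeeping computation, and no step poses a real difficulty. The only subtlety worth flagging is the order of multiplication by $x$: because the $e_i$'s anticommute, one must be careful to keep $x$ on the left throughout, and to invert $x$ as $-x/r^2$ on the left rather than the right when passing from $-x\,\partial_x=E+\Gamma$ to the final polar form. After that, writing $x/r^2 = (x/|x|)(1/r)$ and factoring $1/r$ out of $E+\Gamma$ yields exactly~\eqref{dirac_operator_polar_form_gamma}.
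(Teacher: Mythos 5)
Your proof is correct. The paper states this lemma without supplying a proof (it is presented as a routine direct calculation), so there is no argument to compare against; your computation is exactly the standard one that fills that gap. The reduction to the operator identity $-x\,\partial_x = E + \Gamma$, the split of $\sum_{i,j}e_ie_jx_i\partial_{x_j}$ into diagonal ($i=j$, yielding $E$) and antisymmetrised off-diagonal pairs (yielding $\Gamma$ with the sign in~\eqref{Definition_Gamma}), and the left-inversion by $x^{-1}=-x/r^2$ together with $E=r\partial_r$ all check out, including your caveat about noncommutativity requiring the inverse to act on the left.
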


\begin{defn}
Let $f:S^{m-1} \to \mathbb{C} $. 
We define $F:\mathbb{R}^{m}\to \mathbb{C}$ by $F(x)=f(\frac{x}{\vert x\vert})\; (x\neq 0)$. 
The Laplace-Beltrami operator $\triangle_{T}$ can be defined as follows
\begin{equation}
\triangle_{T}f(x)=\triangle F(x)\big\vert_{x\in S^{m-1}}.
\end{equation}
\end{defn}
It is possible to see that \cite{delanghe2012clifford,craddock2013fractional} 
$$ \Gamma^{2}  = -\triangle_{T}+(m-2)\Gamma. $$
Furthermore, spherical harmonics are eigenfunctions of $\triangle_{T}$. In fact,
\begin{equation}\label{laplace_Beltrami_eigenvalue}
\triangle_{T}\psi = -k(k+m-2)\psi
\end{equation}
for all $\psi\in \mathcal{H}^{+}(k)$. Now here we want to review the definition of complex adjoint matrix from \cite{zhang1997quaternions}.
\begin{defn}\label{Definition_Complex_Adjoint}
Let $\mathbb{H}=\mathbb{R}_{2}$ be the algebra of (real) quaternions and
$$\mathbb{H}_{e}=sp_{\mathbb{R}}\{ 1,e_{12} \}$$ be the even subalgebra of $\mathbb{H}$, $\mathbb{H}_{e}$ is isomorphic to the complex numbers $\mathbb{C}$ and each $q\in \mathbb{H}$ decomposes as
$$q=q_1+q_2 e_2$$
with $q_1, q_2 \in \mathbb{H}_e$.	Similarly, each matrix $A\in M_{n\times n}(\mathbb{H})$ decomposes as 
\begin{equation} \label{Decomposition_Matrix_Subalgebra}
	A=A_{1}+A_{2}e_{2}
\end{equation}
with $A_{1},\, A_{2} \in M_{n\times n}(\mathbb{H}_{e})\cong M_{n\times n}(\mathbb{C})$. The complex adjoint $\chi(A)\in M_{2n\times 2n}(\mathbb{C})$ of $A$ is given by 
$$
\chi(A):=\begin{bmatrix}
A_{1} & A_{2} \\
-\overline{A_2} & \overline{A_1} \\
\end{bmatrix}
$$
with $A_{1},\, A_{2}  $ as in \eqref{Decomposition_Matrix_Subalgebra}.
\end{defn}
\begin{thm}\cite{lee1948eigenvalues}. \label{Lee_s_theorem_for_quaternionic_matrix}
Let $ A,B\in M_{n\times n}(\mathbb{H}) $ and $\chi:M_{n\times n}(\mathbb{H})\to M_{2n\times 2n}(\mathbb{C})$ the complex adjoint mapping. Then
\begin{enumerate}
\item[(i)]  $ \chi(I_{n})=I_{2n},$
\item[(ii)]  $\chi (AB)= \chi(A)\chi(B),$
\item[(iii)]  $\chi(A^{\ast})= (\chi(A))^{\ast},\;\; \textnormal{here $\ast$ means the conjugation transpose of a matrix,}$
\item[(iv)]  $\chi (A^{-1})=(\chi(A))^{-1}\;\; \textnormal{if}\; A^{-1}\; \textnormal{exist}.$
\end{enumerate}
\end{thm}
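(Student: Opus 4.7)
The plan is to verify the four properties one at a time, with (i) being essentially immediate, (ii) being the workhorse that requires careful use of the commutation rule between $e_2$ and the even subalgebra, (iii) needing an explicit formula for the quaternionic conjugate of an element written in the form $q_1+q_2e_2$, and (iv) following formally from (i) and (ii).

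The first step is to record the central commutation identity: because $e_2 e_{12}=e_2 e_1 e_2=-e_1=-e_{12}e_2$, we have $e_2 q_1=\overline{q_1}\,e_2$ for every $q_1\in\mathbb{H}_e$, where on $\mathbb{H}_e\cong\mathbb{C}$ the Hermitian conjugation of the ambient algebra restricts to complex conjugation. Entrywise this extends to $e_2 M=\overline{M}e_2$ for $M\in M_{n\times n}(\mathbb{H}_e)$. Part (i) is then trivial: $I_n=I_n+0\cdot e_2$, so $\chi(I_n)=\bigl[\begin{smallmatrix}I_n&0\\0&I_n\end{smallmatrix}\bigr]=I_{2n}$.

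For part (ii) I would expand $AB=(A_1+A_2e_2)(B_1+B_2e_2)$ and push every $e_2$ to the right using $e_2 B_j=\overline{B_j}e_2$ together with $e_2^2=-1$, obtaining $(AB)_1=A_1B_1-A_2\overline{B_2}$ and $(AB)_2=A_1B_2+A_2\overline{B_1}$. Substituting into the definition of $\chi$ and then performing the $2\times 2$ block multiplication of $\chi(A)$ with $\chi(B)$ directly, I would match the four blocks; the identities $\overline{\overline{A_j}}=A_j$ and $\overline{XY}=\overline{X}\,\overline{Y}$ (for complex matrices, entrywise) make the two sides coincide. For part (iii) I would first show that for $q=q_1+q_2e_2\in\mathbb{H}$, Hermitian conjugation gives $\overline{q}=\overline{q_1}-q_2e_2$: using $\overline{\lambda\mu}=\overline{\mu}\,\overline{\lambda}$ and $\overline{e_2}=-e_2$, one finds $\overline{q_2e_2}=-e_2\,\overline{q_2}=-q_2e_2$ after another application of the commutation rule. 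Promoting this to matrices and applying a transpose, $(A^*)_1=A_1^*$ (complex conjugate transpose) and $(A^*)_2=-A_2^T$. Substituting into $\chi(A^*)$ and comparing with the block form of $\chi(A)^*=\overline{\chi(A)}^{\,T}$, the entries line up after using $\overline{A_j^*}=A_j^T$.

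Finally, (iv) is a one-liner from (i) and (ii): if $A^{-1}$ exists, then $\chi(A)\chi(A^{-1})=\chi(AA^{-1})=\chi(I_n)=I_{2n}$, so $\chi(A)$ is invertible with inverse $\chi(A^{-1})$. The main obstacle is the bookkeeping in (ii) and (iii): keeping straight when a bar denotes entrywise quaternion conjugation versus complex conjugation (these coincide on $\mathbb{H}_e$), and applying the commutation rule $e_2M=\overline{M}e_2$ at exactly the right step so that the resulting blocks match those coming from the $2\times 2$ block-matrix multiplication in the definition of $\chi$. Once this is handled cleanly, all four statements drop out with minimal further work.
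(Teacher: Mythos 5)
The paper does not prove this result; it simply cites Lee's 1948 paper \cite{lee1948eigenvalues} and records the statement. So there is no in-paper proof to compare against. Evaluating your argument on its own merits: it is the standard direct verification, and it is correct in structure and substance. The central observation—that $e_2$ anticommutes with $e_{12}$, so that $e_2 q_1 = \overline{q_1}\,e_2$ on the even subalgebra $\mathbb{H}_e$ where Hermitian conjugation restricts to complex conjugation—is exactly what makes the block decompositions $(AB)_1 = A_1B_1 - A_2\overline{B_2}$, $(AB)_2 = A_1B_2 + A_2\overline{B_1}$, and $(A^*)_1 = A_1^*$, $(A^*)_2 = -A_2^T$ fall out, and the $2\times2$ block comparisons for (ii) and (iii) then close up. Deriving (iv) from (i) and (ii) is correct; it is worth adding the symmetric identity $\chi(A^{-1})\chi(A)=I_{2n}$ so that $\chi(A^{-1})$ is seen to be a two-sided inverse of $\chi(A)$, though of course for square matrices a one-sided inverse suffices.

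One small slip in the opening line: you write $e_2 e_{12} = e_2 e_1 e_2 = -e_1 = -e_{12}e_2$, but in fact $e_2 e_1 e_2 = (-e_1 e_2)e_2 = -e_1 e_2^2 = e_1$, and likewise $-e_{12}e_2 = -e_1 e_2^2 = e_1$. So the chain should read $e_2 e_{12} = e_1 = -e_{12}e_2$. The anticommutation relation you use, $e_2 e_{12} = -e_{12}e_2$, is still correct (both sides equal $e_1$), and the derived rule $e_2 q_1 = \overline{q_1}e_2$ is right, so the typo does not propagate into the rest of the argument. Overall this is a sound self-contained proof of what the paper leaves as a citation, and it has the additional virtue of verifying that the identities hold in the paper's Clifford-algebra conventions ($\mathbb{H} = \mathbb{R}_2$, $\mathbb{H}_e = \mathrm{sp}_{\mathbb{R}}\{1, e_{12}\}$) rather than relying on Lee's original $i,j,k$ formulation.
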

\begin{defn}
A self-adjoint matrix $ G\in M_{n\times n}(\mathbb{H}) $ is said to be positive semidefinite if $ x^{\ast}Gx\geq 0 $ for all $x\in\mathbb{H}^{n}$.
\end{defn}	
\begin{lem} \cite{zhang1997quaternions}. \label{selfadjoint by complex adjoint transformation}
$A\in M_{n\times n}(\mathbb{H})$ is a self-adjoint positive semidefinite matrix if and only if $\chi(A)\in M_{2n\times 2n}(\mathbb{C})$ is a self-adjoint positive semidefinite matrix.
\end{lem}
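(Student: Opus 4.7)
The claim has two ingredients: preservation of self-adjointness under $\chi$, and preservation of positive semidefiniteness. The first is immediate from Theorem~\ref{Lee_s_theorem_for_quaternionic_matrix}(iii) together with the injectivity of $\chi$ (the blocks $A_{1}$ and $A_{2}$ of \eqref{Decomposition_Matrix_Subalgebra} can be read directly from the four $n\times n$ blocks of $\chi(A)$): $A=A^{*}$ if and only if $\chi(A)=\chi(A^{*})=\chi(A)^{*}$. This reduces the lemma to showing that, among self-adjoint matrices, positive semidefiniteness is preserved by $\chi$.

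The strategy is to produce an $\mathbb{R}$-linear bijection $\Phi\colon\mathbb{H}^{n}\to\mathbb{C}^{2n}$ that converts the quaternionic quadratic form $x\mapsto x^{*}Ax$ into the complex quadratic form $z\mapsto z^{*}\chi(A)z$. Mirroring the block shape of $\chi(A)$ in Definition~\ref{Definition_Complex_Adjoint}, I would set
\[
\Phi(x_{1}+x_{2}e_{2}):=\begin{pmatrix}x_{1}\\ -\overline{x_{2}}\end{pmatrix},\qquad x_{1},x_{2}\in\mathbb{C}^{n}\cong\mathbb{H}_{e}^{n},
\]
which is visibly a real-linear isomorphism. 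The core step is then to verify the intertwining identity
\[
x^{*}Ax \;=\; \Phi(x)^{*}\,\chi(A)\,\Phi(x)\qquad\text{for every self-adjoint }A\in M_{n\times n}(\mathbb{H})\text{ and every }x\in\mathbb{H}^{n}.
\]

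To establish this identity I would first observe that when $A=A^{*}$, the scalar $x^{*}Ax\in\mathbb{H}$ satisfies $(x^{*}Ax)^{*}=x^{*}A^{*}x=x^{*}Ax$, so it is fixed by quaternionic conjugation and hence lies in $\mathbb{R}$. In particular, it coincides with its $\mathbb{H}_{e}$-part in the decomposition $\mathbb{H}=\mathbb{H}_{e}\oplus\mathbb{H}_{e}e_{2}$. A direct expansion, using only $e_{2}z=\overline{z}e_{2}$ for $z\in\mathbb{H}_{e}$ and $e_{2}^{2}=-1$, produces four terms for the $\mathbb{H}_{e}$-part of $x^{*}Ax$ which, on the other side, are precisely the four terms that come out of the block product $\Phi(x)^{*}\chi(A)\Phi(x)$.

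Once the intertwining identity is available, both directions of the biconditional are immediate. If $\chi(A)$ is self-adjoint positive semidefinite, then $x^{*}Ax=\Phi(x)^{*}\chi(A)\Phi(x)\geq 0$ for every $x\in\mathbb{H}^{n}$. Conversely, if $A$ is self-adjoint positive semidefinite, then any $z\in\mathbb{C}^{2n}$ equals $\Phi(x)$ for the unique $x=\Phi^{-1}(z)\in\mathbb{H}^{n}$, whence $z^{*}\chi(A)z=x^{*}Ax\geq 0$. The main obstacle is purely bookkeeping: choosing the signs and conjugations in the definition of $\Phi$ so that the block pattern of $\chi(A)$ matches the $\mathbb{H}_{e}$-part of the quaternionic product. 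An equivalent route is to extend $\chi$ from $M_{n\times n}(\mathbb{H})$ to $M_{n\times 1}(\mathbb{H})$ and exploit $\chi(x^{*}Ax)=\chi(x)^{*}\chi(A)\chi(x)$ combined with $\chi(r)=r\,I_{2}$ for $r\in\mathbb{R}$, which avoids isolating the $\mathbb{H}_{e}$-part by hand.
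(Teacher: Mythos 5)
The paper provides no proof of this lemma; it simply cites \cite{zhang1997quaternions}, so there is no internal argument to compare against. Your proof is correct. Preservation of self-adjointness is indeed immediate from Theorem~\ref{Lee_s_theorem_for_quaternionic_matrix}(iii) together with the obvious injectivity of $\chi$ (the blocks $A_1,A_2$ are read off the top row of $\chi(A)$). For the positive-semidefiniteness half, note that your $\Phi$ is precisely the first column of the complex adjoint $\chi(x)$ of the $n\times 1$ quaternionic matrix $x$, so the ``two routes'' you describe collapse to one identity: the $(1,1)$ entry of $\chi(x^*Ax)=\chi(x)^*\chi(A)\chi(x)$ equals the $\mathbb{H}_{e}$-part of $x^*Ax$ on one side and $\Phi(x)^*\chi(A)\Phi(x)$ on the other, and for self-adjoint $A$ the scalar $x^*Ax$ is real and hence coincides with its $\mathbb{H}_{e}$-part. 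Carrying out the expansion using $e_2z=\overline{z}\,e_2$ for $z\in\mathbb{H}_{e}$ and $e_2^2=-1$ does yield the four matching terms on each side, and the surjectivity of the real-linear bijection $\Phi:\mathbb{H}^n\to\mathbb{C}^{2n}$ then gives both directions of the equivalence between the two quadratic forms. This is a clean, elementary, self-contained argument that avoids the spectral-theoretic route commonly used to prove this fact, so it adds genuine value beyond the bare citation in the paper.
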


The Jacobi polynomials $P_n^{(\alpha ,\beta )}(x)$ ($n$ a nonnegative integer, $\alpha ,\beta >-1$, $x\in[-1,1]$) satisfy the differential equation
$$(1-x^2)y''+[\beta -\alpha -(\alpha +\beta+2)x]y'=-n(n+\alpha +\beta +1)y.$$
The eigenvalues $-n(n+\alpha +\beta +1)$ are non-degenerate. They are normalized in such a way as to admit the explicit representation
$$P_n^{(\alpha ,\beta )}(x)=2^{-n}\sum_{s=0}^n\binom{n+\alpha}{n-s}\binom{n+\beta}{s}(x-1)^s(x+1)^{n-s}.$$
From \cite{gradshteyn2007ryzhik} we have the following recurrence
\begin{align}
(2n+\alpha+\beta)(1-x^{2})\frac{d}{dx}P_{n}^{(\alpha,\beta)}(x)=&n\big[ (\alpha-\beta)-(2n+\alpha+\beta)x\big]P_{n}^{(\alpha,\beta)}(x)\nonumber\\
+&2(n+\alpha)(n+\beta)P_{n-1}^{(\alpha,\beta)}(x). 
\end{align}
By letting $x=\cos\theta$ we have that
\begin{align}
	-(2n+\alpha+\beta)\sin\theta\frac{d}{d\theta}(P_{n}^{(\alpha,\beta)}(\cos\theta))=&n\big[ (\alpha-\beta)-(2n+\alpha+\beta)\cos\theta\big]P_{n}^{(\alpha,\beta)}(\cos\theta)\nonumber\\
	+&2(n+\alpha)(n+\beta)P_{n-1}^{(\alpha,\beta)}(\cos\theta). \label{jecobi_derivative_recurrence}
\end{align}

\section{Constructing $3$-Dimensional Spherical Monogenics from \\ $3$-Dimensional Spherical Harmonics}
In this section, our objective is to construct $3$-dimensional spherical monogenic functions of degree $(k-1)$ by applying the Dirac operator to $3$-dimensional spherical harmonics of degree $k$. By \eqref{dimension_monogenic}, the dimension of the space $\mathcal{M}_{\ell}^{+}(k)$ of spherical monogenics of degree $k$ in $\mathbb{R}^{3}$ is $k+1$. Hence, our goal is to find $k+1$ linearly independent spherical monogenics of degree $k$. Let $x=(x_{1},x_{2},x_{3})=(r\cos\theta \sin\phi , r\cos\theta \sin\theta , r\cos\phi)\in \mathbb{R}^{3}$ with $r\geq 0,$ $0\leq \theta< 2\pi$ and $0\leq \phi\leq \pi$. To proceed with the construction, we use the particular orthonormal basis $\{H_{k}^{n}\}_{n=0}^{2k} $ of spherical harmonics of degree $k$ in $\mathbb{R}^{3}$ given by
\begin{align}
&H_{k}^{0}(x)=\frac{r^{k}}{\sqrt{8\pi}}P_{k}^{(0,0)}(\cos\phi)\label{zero_harmonic},\\
&H_{k}^{2n}(x)=\frac{r^{k}}{2^{n+1}\sqrt{\pi}}(\sin\phi)^{n}P_{k-n}^{(n,n)}(\cos\phi)\cos n\theta \;\;\;\; (1\leq n\leq k), \label{even_harmonic}\\
&H_{k}^{2n+1}(x)=\frac{r^{k}}{2^{n+1}\sqrt{\pi}}(\sin\phi)^{n}P_{k-n}^{(n,n)}(\cos\phi)\sin n\theta \;\;\;\; (1\leq n\leq k)\label{odd_harmonic}.
\end{align}
We recall that the $\dim(\mathcal{H}_{k}^{3})=2k+1$. By changing variables to spherical coordinates $(r,\theta,\phi),$ first order partial derivatives $  \partial_{x_{1}} f, \; \partial_{x_{2}}f,\;  \partial_{x_{3}}f $ may be written as follows 
$$\begin{bmatrix}
\partial_{x_{1}}  \\
\\
\partial_{x_{2}}\\
\\
\partial_{x_{3}}
\end{bmatrix}=\begin{bmatrix}
\cos\theta \sin\phi & -\dfrac{\sin\theta}{r\sin\phi} &\dfrac{\cos\theta\cos\phi}{r} \\
\sin\theta \sin\phi & \dfrac{\cos\theta}{r\sin\phi} &\dfrac{\sin\theta\cos\phi}{r} \\
\cos\phi & 0 & -\dfrac{\sin\phi}{r}
\end{bmatrix}
\begin{bmatrix}
\partial_{r}  \\
\\
\partial_{\theta} \\
\\
\partial_{\phi}
\end{bmatrix}.$$
In spherical coordinates, the spherical Dirac operator becomes
\begin{align}\label{Gamma_Spherical_Version}
\Gamma f=& -e_{12}\dfrac{\partial f}{\partial \theta}-e_{13}\bigg(-\cos\theta\, \dfrac{\partial f}{\partial \phi}+\cot\phi\, \sin\theta\, \dfrac{\partial f}{\partial \theta}\bigg)-e_{23}\bigg(-\sin\theta\,\dfrac{\partial f}{\partial \phi}-\cot\phi \,\cos\theta\,\dfrac{\partial f}{\partial \theta}\bigg)\nonumber\\
=&\frac{1}{\sin\phi}\, e_{23}e^{-e_{12}\theta}e^{\phi e_{13}e^{-e_{12}\theta}}\, \frac{\partial f}{\partial \theta}+e_{13}e^{-e_{12}\theta}\,\frac{\partial f}{\partial\phi},
\end{align}
where $e^{e_{ij}\theta}=\cos\theta+e_{ij}\sin\theta$.
\begin{lem}\label{conjugation_ephi}
	Let $\theta$, $\phi$ be real numbers. Then 
	$$\overline{e^{-\phi\, e_{13} e^{-e_{12}\theta}}}=e^{\phi\, e_{13} e^{-e_{12}\theta}}.$$
\end{lem}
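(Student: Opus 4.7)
The plan is to reduce the identity to the single observation that $w:=e_{13}e^{-e_{12}\theta}$ is a bivector (so $\bar w=-w$) and squares to $-1$, which immediately lets us evaluate both sides of the proposed equation from the series definition of the exponential.

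First I would unpack $w$ using $e^{-e_{12}\theta}=\cos\theta-e_{12}\sin\theta$:
\begin{equation*}
w=e_{13}e^{-e_{12}\theta}=e_{13}\cos\theta-e_{13}e_{12}\sin\theta=e_{13}\cos\theta+e_{23}\sin\theta,
\end{equation*}
using $e_{13}e_{12}=e_1e_3e_1e_2=-e_3e_2=e_{23}$, sorry, $=-e_{23}$ so with the minus sign we recover $+e_{23}\sin\theta$. A short computation using $e_{13}^{2}=e_{23}^{2}=-1$ and $e_{13}e_{23}+e_{23}e_{13}=0$ then gives $w^{2}=-1$. Consequently, for every real $t$,
\begin{equation*}
e^{tw}=\sum_{n=0}^{\infty}\frac{(tw)^{n}}{n!}=\cos t + w\sin t.
\end{equation*}

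Next I would apply the Hermitian conjugation rules from the Preliminaries. Since $\overline{e_{ij}}=-e_{ij}$ for $i\neq j$, and $w$ is an ${\mathbb R}$-linear combination of $e_{13}$ and $e_{23}$ with real coefficients, we get $\bar w=-w$ directly; equivalently one can verify this by $\overline{e_{13}e^{-e_{12}\theta}}=\overline{e^{-e_{12}\theta}}\cdot\overline{e_{13}}=e^{e_{12}\theta}(-e_{13})=-(e_{13}\cos\theta+e_{23}\sin\theta)=-w$, using that $e^{e_{12}\theta}e_{13}=e_{13}\cos\theta+e_{23}\sin\theta$.

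Finally, combining the two facts,
\begin{equation*}
\overline{e^{-\phi w}}=\overline{\cos\phi-w\sin\phi}=\cos\phi-\bar w\sin\phi=\cos\phi+w\sin\phi=e^{\phi w},
\end{equation*}
which is exactly the claim. There is really no obstacle here: the only thing to be careful about is the sign bookkeeping when multiplying the basis bivectors $e_{12}$ and $e_{13}$, and the fact that Hermitian conjugation reverses the order of products so that the bivector $w$ indeed satisfies $\bar w=-w$. Everything else is the standard power-series identity $e^{tw}=\cos t+w\sin t$ for an element $w$ with $w^{2}=-1$.
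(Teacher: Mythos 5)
Your proof is correct and follows essentially the same route as the paper's: write $e_{13}e^{-e_{12}\theta}=e_{13}\cos\theta+e_{23}\sin\theta$, note this bivector squares to $-1$ so that $e^{-\phi w}=\cos\phi-w\sin\phi$, and then apply Hermitian conjugation. The paper leaves the step $\bar w=-w$ implicit, whereas you spell it out (including a second verification via reversal of the product), but the argument is the same.
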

\begin{proof}
	Since $(e_{13}\cos\theta+e_{23}\sin\theta)^{2}=-1$, direct calculation gives
	\begin{align*}
		e^{-\phi\, e_{13} e^{-e_{12}\theta}}&=e^{-\phi\, e_{13} [\cos\theta -e_{12}\sin\theta]}\\
		&=e^{-\phi\,  [e_{13}\, \cos\theta +e_{23}\, \sin\theta]}=\cos\phi-[e_{13}\, \cos\theta +e_{23}\,\sin\theta] \sin\phi,
	\end{align*}
	Hence,
\begin{align*}
	\overline{e^{-\phi\, e_{13} e^{-e_{12}\theta}}}&=\cos\phi+[e_{13}\, \cos\theta +e_{23}\,\sin\theta] \sin\phi\\
	&=\cos\phi+e_{13}[\cos\theta -e_{12}\sin\theta] \sin\phi=e^{\phi\, e_{13} e^{-e_{12}\theta}}.
	\end{align*}
\end{proof}
\begin{lem}\label{Commutation_Lemma}
	Let $\theta,\,\phi ,\, \alpha$ be  real numbers. Then 
	$$e^{\phi e_{13}e^{-e_{12}\theta}}e^{-\alpha e_{12}\theta}=\cos\phi \, e^{-\alpha e_{12}\theta}+\sin\phi\,  e^{(\alpha +1)e_{12}\theta}e_{13} .$$
\end{lem}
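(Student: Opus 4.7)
The plan is to exploit the identity, already extracted in the proof of Lemma \ref{conjugation_ephi}, that
$$e^{\phi e_{13}e^{-e_{12}\theta}}=\cos\phi+\sin\phi\,e_{13}e^{-e_{12}\theta},$$
which holds because $(e_{13}\cos\theta+e_{23}\sin\theta)^{2}=-1$ and $e_{13}\cos\theta+e_{23}\sin\theta=e_{13}e^{-e_{12}\theta}$. Once this expansion is in hand, multiplying both sides on the right by $e^{-\alpha e_{12}\theta}$ immediately produces the first term $\cos\phi\,e^{-\alpha e_{12}\theta}$ of the claimed identity, so the entire task reduces to evaluating the single product $e_{13}\,e^{-e_{12}\theta}\,e^{-\alpha e_{12}\theta}$.

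First, I would combine the two exponentials on the right of $e_{13}$ using the fact that powers of $e_{12}$ commute with one another:
$$e^{-e_{12}\theta}\,e^{-\alpha e_{12}\theta}=e^{-(\alpha+1)e_{12}\theta}.$$
Then I would push $e_{13}$ through this exponential. The key algebraic fact is that $e_{13}$ and $e_{12}$ anticommute, i.e.\ $e_{13}e_{12}=-e_{12}e_{13}$, which one checks directly from $e_ie_j=-e_je_i$. Expanding the exponential as $\cos((\alpha+1)\theta)-e_{12}\sin((\alpha+1)\theta)$ and using the anticommutation relation termwise yields
$$e_{13}\,e^{-(\alpha+1)e_{12}\theta}=\bigl(\cos((\alpha+1)\theta)+e_{12}\sin((\alpha+1)\theta)\bigr)e_{13}=e^{(\alpha+1)e_{12}\theta}e_{13}.$$
Multiplying this by the scalar $\sin\phi$ and adding the contribution $\cos\phi\,e^{-\alpha e_{12}\theta}$ from the first step gives exactly the right-hand side of the lemma.

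There is really no deep obstacle here; the whole content is bookkeeping with the basis bivectors. The one point that must be handled carefully is the sign flip in the exponent when $e_{13}$ is commuted past $e^{-(\alpha+1)e_{12}\theta}$: it is tempting to treat all the $e_{ij}$ as mutually commuting, but it is precisely the anticommutation $e_{13}e_{12}=-e_{12}e_{13}$ that turns the exponent $-(\alpha+1)e_{12}\theta$ into $+(\alpha+1)e_{12}\theta$, and this sign change is exactly what makes the final identity true.
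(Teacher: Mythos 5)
Your proof is correct and takes essentially the same route as the paper: both begin by expanding $e^{\phi e_{13}e^{-e_{12}\theta}}$ into $\cos\phi + \sin\phi\, e_{13}e^{-e_{12}\theta}$ (using the fact that $e_{13}e^{-e_{12}\theta}$ squares to $-1$), and both exploit the anticommutation of $e_{13}$ with $e_{12}$ to flip the sign of the remaining exponent. Your presentation is a little cleaner in that you keep the $e_{12}$-exponentials intact and apply the commutation rule $e_{13}e^{-\beta e_{12}\theta}=e^{\beta e_{12}\theta}e_{13}$ in one stroke, whereas the paper expands both factors fully into trigonometric form before regrouping, but the underlying argument is the same.
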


\begin{proof} Let $\mu =e_{12}\cos\theta +e_{23}\sin\theta$. Then $\mu^2=-1$, so
	\begin{align*}
		e^{\phi e_{13}e^{-e_{12}\theta}}e^{-\alpha e_{12}\theta}&=e^{\phi [e_{13}\cos\theta+e_{23}\sin\theta]}e^{-\alpha e_{12}\theta}\\
		&=[\cos\phi +(e_{13}\cos\theta+e_{23}\sin\theta)\sin\phi ][\cos (\alpha\theta )-e_{12}\sin (\alpha\theta )]\\
		&=\cos (\alpha\theta )[\cos\phi +(e_{13}\cos\theta +e_{23}\sin\theta )\sin\phi ]\\
		&\qquad -\sin (\alpha\theta )[\cos\phi +(e_{13}\cos\theta +e_{23}\sin\theta )\sin\phi ]\, e_{12}\\
		&=\cos\phi [\cos (\alpha\theta )-e_{12}\sin (\alpha\theta )]\\
		&+[\cos (\alpha\theta )+e_{12}\sin (\alpha\theta )][e_{13}\cos\theta +e_{23}\sin\theta ]\sin\phi\\
		&=\cos\phi [\cos (\alpha\theta )-e_{12}\sin (\alpha\theta )]\\
		&+[\cos (\alpha\theta )+e_{12}\sin (\alpha\theta )][\cos\theta +e_{12}\sin\theta ]e_{13}\sin\phi\\
		&=\cos \phi \, e^{-\alpha e_{12}\theta}+e^{\alpha e_{12}\theta}e^{e_{12}\theta}e_{13}\sin\phi\\
		&=\cos\phi \, e^{-\alpha e_{12}\theta}+e^{(\alpha +1)e_{12}\theta}e_{13}\sin\phi .
	\end{align*}
\end{proof}
An application of \eqref{Gamma_Spherical_Version} and Lemma \ref{conjugation_ephi} gives
\begin{align*}
	\Gamma H_{k}^{2n}(r,\theta,\phi)
	&= \frac{1}{\sin\phi}e_{23}e^{-e_{12}\theta}e^{\phi e_{13}e^{-e_{12}\theta}}\bigg[\frac{r^{k}}{2^{n+1}\sqrt{\pi}}(\sin\phi)^{n}P_{k-n}^{(n,n)}(\cos\phi)(-n\sin n\theta)\bigg]\\
	&+e_{13}e^{-e_{12}\theta}\bigg[\frac{r^{k}}{2^{n+1}\sqrt{\pi}}n(\sin\phi)^{n-1}(\cos\phi)P_{k-n}^{(n,n)}(\cos\phi)(\cos n\theta)\\
	&+\frac{r^{k}}{2^{n+1}\sqrt{\pi}}(\sin\phi)^{n}P_{k-n}^{(n,n)'}(\cos\phi)(-\sin\phi)(\cos n\theta)\bigg]\\
	&=-ne_{23}e^{-e_{12}\theta}e^{\phi e_{13}e^{-e_{12}}\theta}\frac{r^{k}}{2^{n+1}\sqrt{\pi}}(\sin\phi)^{n-1}P_{k-n}^{(n,n)}(\cos\phi)\sin k\theta\\
	&+ne_{13}e^{-e_{12}\theta}\frac{r^{k}}{2^{n+1}\sqrt{\pi}}(\sin\phi)^{n-1}(\cos\phi)P_{k-n}^{(n,n)}(\cos\phi)\cos n\theta\\
	&-\frac{r^{k}}{2^{n+1}\sqrt{\pi}}e_{13}e^{-e_{12}\theta} (\sin\phi)^{n+1} P_{k-n}^{(n,n)'}(\cos\phi)\cos n\theta.
\end{align*}
Now by \eqref{jecobi_derivative_recurrence} with $\alpha=\beta=n$, we have 
\begin{align*}
	\Gamma H_{k}^{2n}(r,\theta,\phi)&=-ne_{23}e^{-e_{12}\theta}e^{\phi e_{13}e^{-e_{12}\theta}}\frac{r^{k}}{2^{n+1}\sqrt{\pi}}(\sin\phi)^{n-1}P_{k-n}^{(n,n)}(\cos\phi)\sin n\theta\\
	&+ne_{13}e^{-e_{12}\theta}\frac{r^{k}}{2^{n+1}\sqrt{\pi}}(\sin\phi)^{n-1}(\cos\phi)P_{k-n}^{(n,n)}(\cos\phi)\cos n\theta\\
	&-\frac{r^{k}}{2^{n+1}\sqrt{\pi}}e_{13}e^{-e_{12}\theta} (\sin\phi)^{n-1} n P_{k-n-1}^{(n,n)}(\cos\phi) \cos n\theta\\
	&+\frac{r^{k}}{2^{n+1}\sqrt{\pi}}e_{13}e^{-e_{12}\theta} (\sin\phi)^{n-1} (k-n) P_{k-n}^{(n,n)}(\cos\phi) \cos n\theta\\
	&= \frac{r^{k}}{2^{n+1}\sqrt{\pi}}(\sin\phi)^{n-1}\bigg[-n e_{23}e^{-e_{12}\theta}e^{\phi e_{13}e^{-e_{12}\theta}}P_{k-n}^{(n,n)}(\cos\phi)\sin n\theta\\
	-&k e_{13}e^{-e_{12}\theta}\, P_{k-n-1}^{(n,n)}(\cos\phi)\cos n\theta+k e_{13}e^{-e_{12}\theta} (\cos\phi)  \, P_{k-n}^{(n,n)}(\cos\phi)\cos n\theta\bigg].
\end{align*}
Similarly, 
\begin{align*}
	\Gamma H_{k}^{2n+1}(r,\theta,\phi)=& \frac{r^{k}}{2^{n+1}\sqrt{\pi}}(\sin\phi)^{n-1}\bigg[n e_{23}e^{-e_{12}\theta}e^{\phi e_{13}e^{-e_{12}\theta}} P_{k-n}^{(n,n)}(\cos\phi)\cos n\theta\\
	-&k e_{13}e^{-e_{12}\theta}\, P_{k-n-1}^{(n,n)}(\cos\phi)\sin n\theta\\
	+&k e_{13}e^{-e_{12}\theta}(\cos\phi)\, P_{k-n}^{(n,n)}(\cos\phi)\sin n\theta\bigg].
\end{align*}
Let $\omega:=\frac{x}{\vert x\vert}$. We define the the function $Y_{k-1}^{2n}:\mathbb{R}^{3}\to\mathbb{R}_{3}$ by 
$Y_{k-1}^{2n}(\theta,\phi):=\partial_{x} H_{k}^{2n}(r,\theta,\phi)\bigg\vert_{r=1}$. Then by \eqref{dirac_operator_polar_form_gamma}
\begin{align}
	-\omega & Y_{k-1}^{2n}(\theta,\phi)=-\omega\partial_{x} H_{k}^{2n}(r,\theta,\phi)\bigg\vert_{r=1}\nonumber\\
	&=\frac{(\sin\phi)^{n-1}}{2^{n+1}\sqrt{\pi}}\bigg[ kP_{k-n}^{(n,n)}(\cos\phi)\cos n\theta \, e_{13}e^{-e_{12}\theta}e^{-\phi e_{13}e^{-e_{12}\theta}}\nonumber \qquad\qquad\\
	&-n\, e_{23}e^{-e_{12}\theta}e^{\phi e_{13}e^{-e_{12}\theta}}P_{k-n}^{(n,n)}(\cos\phi)\sin n\,\theta\label{even_monogenics_after_dirac_operator} \nonumber\\
	&-k\, e_{13}e^{-e_{12}\theta}\, P_{k-n-1}^{(n,n)}(\cos\phi)\cos n\theta\bigg],
\end{align}
for $1\leq n\leq k-1$. Similarly, we define $Y_{k-1}^{2n+1}:\mathbb{R}^{3}\to\mathbb{R}_{3}$ by 
$Y_{k-1}^{2n+1}(\theta,\phi):=\partial_{x} H_{k}^{2n+1}(r,\theta,\phi)\bigg\vert_{r=1}$. Then
\begin{align}
	-\omega Y_{k-1}^{2n+1}(\theta,\phi):=&\frac{(\sin\phi)^{n-1}}{2^{n+1}\sqrt{\pi}}\bigg[ kP_{k-n}^{(n,n)}(\cos\phi)\sin n\theta\, e_{13}e^{-e_{12}\theta}e^{-\phi e_{13}e^{-e_{12}\theta}}\nonumber\\
	+&n e_{23}e^{-e_{12}\theta}e^{\phi e_{13}e^{-e_{12}\theta}}P_{k-n}^{(n,n)}(\cos\phi)\cos n\theta\nonumber\\
	-&k e_{13}e^{-e_{12}\theta}\, P_{k-n-1}^{(n,n)}(\cos\phi)\sin n\theta\bigg], \label{odd_monogenics_after_dirac_operator}
\end{align}
for $1\leq n\leq k-1$. For this range of $n$, let
\begin{equation}\label{monogenic_degree_kminusone}
	F_{k-1}^{n}(\theta,\phi):= (Y_{k-1}^{2n}-Y_{k-1}^{2n+1}e_{12}).
\end{equation} 
By equations \eqref{even_monogenics_after_dirac_operator}, \eqref{odd_monogenics_after_dirac_operator}, and \eqref{monogenic_degree_kminusone}, we have
\begin{align}
	F_{k-1}^{n}(\theta,\phi)=\omega\frac{(\sin\phi)^{n-1}}{2^{n+1}\sqrt{\pi}}\bigg[& (k-n) e_{13}e^{-e_{12}\theta}e^{-\phi e_{13}e^{-e_{12}\theta}}e^{-e_{12}n\theta}P_{k-n}^{(n,n)}(\cos\phi)\nonumber\\
	&-k e_{13}e^{-(n+1)e_{12}\theta}\, P_{k-n-1}^{(n,n)}(\cos\phi)\bigg].\label{first_nminusone_monogenic}
\end{align}
We can also calculate $F_{k-1}^{0}(\theta,\phi)$ using \eqref{zero_harmonic} as follows
\begin{align}
	F_{k-1}^{0}(\theta,\phi)&=Y_{k-1}^{0}(\theta,\phi)=\omega (\frac{\partial}{\partial r}+\frac{1}{r}\Gamma)H_{k}^{0}\bigg\vert_{r=1}\notag\\
	&=\omega\frac{k}{\sqrt{8\pi}}P_{k}^{(0,0)}(\cos\phi)-\omega\frac{1}{\sqrt{8\pi}}P_{k}^{(0,0)'}(\cos\phi)\sin\phi \, e_{13}e^{-e_{12}\theta}. \label{Definition_of_Fzero_nminusone}
\end{align}
\begin{thm}\label{Orthonormality_Fk}
	The functions $\{F_{k-1}^{n}\}_{n=0}^{k-1}$ form an orthogonal basis for $\mathcal{M}_{\ell}^{+}(k-1)$ in $\mathbb{R}^{3}$.
\end{thm}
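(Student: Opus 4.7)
The plan is to verify three things: each $F_{k-1}^n$ lies in $\mathcal{M}_{\ell}^{+}(k-1)$, the collection $\{F_{k-1}^n\}_{n=0}^{k-1}$ is pairwise orthogonal on $S^2$, and its cardinality $k$ matches $\dim\mathcal{M}_{\ell}^{+}(k-1)=k$, so that orthogonality plus nonvanishing forces a basis. Membership is immediate from the construction: since $H_k^{2n}$ and $H_k^{2n+1}$ are scalar-valued homogeneous harmonic polynomials of degree $k$, their Dirac derivatives are homogeneous of degree $k-1$ and monogenic (using $\partial_x^2=-\Delta$), so $F_{k-1}^n=\partial_x(H_k^{2n})-\partial_x(H_k^{2n+1})e_{12}$ is a right-$\mathbb{R}_3$-linear combination of elements of the right module $\mathcal{M}_{\ell}^{+}(k-1)$.

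The bulk of the work is orthogonality, which I would establish by first recasting $F_{k-1}^n$ so that its $\theta$-dependence appears only through exponentials $e^{-me_{12}\theta}$. I would apply Lemma \ref{Commutation_Lemma} with $\phi\mapsto-\phi$ and $\alpha=n$ to rewrite the factor $e^{-\phi e_{13}e^{-e_{12}\theta}}e^{-ne_{12}\theta}$ appearing in \eqref{first_nminusone_monogenic}, and combine this with the identity $e_{13}e^{ne_{12}\theta}e_{13}=-e^{-ne_{12}\theta}$ (which follows from $e_{13}e_{12}=-e_{12}e_{13}$ and $e_{13}^2=-1$). This should recast $F_{k-1}^n$ as
\begin{equation*}
F_{k-1}^n(\theta,\phi)=\omega\cdot\big[A_n(\phi)\,e_{13}e^{-(n+1)e_{12}\theta}+B_n(\phi)\,e^{-ne_{12}\theta}\big],
\end{equation*}
where $A_n,B_n$ are real scalar functions of $\phi$ built from Jacobi polynomials; in particular, $B_n(\phi)=(k-n)\sin\phi\,P_{k-n}^{(n,n)}(\cos\phi)$. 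Since $\bar\omega\omega=|\omega|^2=1$ on $S^2$, the inner product should collapse to $\int_{S^2}\overline{G_n}G_{n'}\,d\sigma$, where $G_n$ denotes the bracket. Expanding $\overline{G_n}G_{n'}$ via $\overline{e_{13}}=-e_{13}$ and the anti-commutation $e_{13}e^{\alpha e_{12}\theta}=e^{-\alpha e_{12}\theta}e_{13}$, I expect the clean expression
\begin{equation*}
\overline{G_n}G_{n'}=(A_nA_{n'}+B_nB_{n'})\,e^{(n-n')e_{12}\theta}+(B_nA_{n'}-A_nB_{n'})\,e^{(n+n'+1)e_{12}\theta}e_{13}.
\end{equation*}
Integrating over $\theta\in[0,2\pi)$ will then kill both terms when $n\neq n'$: the first because $\int_0^{2\pi}e^{(n-n')e_{12}\theta}\,d\theta=0$ for distinct $n,n'$, and the second because $n+n'+1\geq 1$.

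Nonvanishing of each $F_{k-1}^n$ will follow from $B_n\not\equiv 0$ on $(0,\pi)$, so orthogonality yields linear independence, and the count delivers a basis of $\mathcal{M}_{\ell}^{+}(k-1)$. The main obstacle is the Clifford-algebraic bookkeeping in the simplification of \eqref{first_nminusone_monogenic}: the signs and anti-commutations of $e_{12}, e_{13}$ must be tracked with care, and the reduction to scalar $\phi$-coefficients times pure $\theta$-exponentials is what makes the final $\theta$-integration transparent. A minor separate point is that the $n=0$ case needs a brief standalone verification because \eqref{first_nminusone_monogenic} was derived only for $1\leq n\leq k-1$; however, expression \eqref{Definition_of_Fzero_nminusone} for $F_{k-1}^0$ already fits the same template with $A_0(\phi)=-(\sin\phi)P_k^{(0,0)'}(\cos\phi)/\sqrt{8\pi}$ and $B_0(\phi)=kP_k^{(0,0)}(\cos\phi)/\sqrt{8\pi}$, so the orthogonality calculation applies verbatim.
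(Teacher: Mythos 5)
Your proof is correct and arrives at the result by the same three-part logic the paper uses (membership, orthogonality, dimension count), but you organize the orthogonality computation differently and more cleanly. The paper expands $\langle F_{k-1}^{n},F_{k-1}^{n'}\rangle$ directly into four cross-terms $I_1,\dots ,I_4$ coming from the two summands in \eqref{first_nminusone_monogenic}, simplifies $I_2,I_3$ using Lemma \ref{Commutation_Lemma}, and kills each piece with $\int_0^{2\pi}e^{e_{12}p\theta}\,d\theta =2\pi\delta_{p0}$; it then performs a separate computation \eqref{orthogonality_F_zero_F_n} to handle the pairing with $F_{k-1}^{0}$. You instead apply Lemma \ref{Commutation_Lemma} (with $\phi\mapsto -\phi$, $\alpha =n$) once, at the level of $F_{k-1}^{n}$ itself, together with $e_{13}e^{\alpha e_{12}\theta}=e^{-\alpha e_{12}\theta}e_{13}$ and $e_{13}^2=-1$, to put every $F_{k-1}^{n}$ into the uniform normal form $\omega\bigl[A_n(\phi)e_{13}e^{-(n+1)e_{12}\theta}+B_n(\phi)e^{-ne_{12}\theta}\bigr]$ with real scalar coefficients; a short check confirms your claimed $A_n,B_n$ (with $B_n(\phi)=\frac{(\sin\phi)^n}{2^{n+1}\sqrt{\pi}}(k-n)P_{k-n}^{(n,n)}(\cos\phi)$), and the same form absorbs $F_{k-1}^{0}$ from \eqref{Definition_of_Fzero_nminusone} without a separate argument. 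This reduces the inner product to the two-term expression $\overline{G_n}G_{n'}=(A_nA_{n'}+B_nB_{n'})e^{(n-n')e_{12}\theta}+(B_nA_{n'}-A_nB_{n'})e^{(n+n'+1)e_{12}\theta}e_{13}$, whose $\theta$-integral manifestly vanishes for $n\neq n'$, and whose $n=n'$ value $2\pi(A_n^2+B_n^2)$ immediately gives nonvanishing of each norm. What you gain is a shorter, unified argument with fewer sign-tracking pitfalls and no special-casing of $n=0$; what the paper's version buys is that the four integrals $I_1,\dots,I_4$ are the same objects that reappear when one goes on to compute the actual norms, so it sets up the subsequent normalization discussion more directly. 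Both are correct; yours is the more economical route to orthogonality per se.
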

\begin{proof}
	We first note that each $Y_{k-1}^{n}=\partial H_{k}^{n}$ is monogenic since $\partial_{x}Y_{k-1}^{n}=-\triangle H_{k}^{n}=0$. Also, since $H_{k}^{n}$ is homogeneous of degree $k,$  $Y_{k-1}^{n}$ is homogeneous of degree $k-1$. Hence $Y_{k-1}^{n}\in \mathcal{M}_{\ell}^{+}(k-1)$. Since $F_{k-1}^{n}=Y_{k-1}^{2n}-Y_{k-1}^{2n+1}e_{12}$, we see that $F_{k-1}^{n}\in \mathcal{M}_{\ell}^{+}(k-1)$ for $0\leq n \leq k$. For $1\leq n\leq k-1,$ we have by \eqref{first_nminusone_monogenic} and Lemma \ref{conjugation_ephi}
	\begin{align}
		&\langle F_{k-1}^{n}, F_{k-1}^{n'}\rangle_{L^{2}(S^{2})}=\int\limits_{S^{2}} \overline{F_{k-1}^{n}(\omega)}F_{k-1}^{n'}(\omega) d\omega\nonumber\\
		&=\int\limits_{0}^{\pi}\int\limits_{0}^{2\pi}\bigg[\frac{(\sin\phi)^{n-1}}{2^{n+1}\sqrt{\pi}}(k-n)e^{e_{12}k\theta}e^{\phi e_{13}e^{-e_{12}\theta}}e^{e_{12}\theta}\overline{e_{13}} P_{k-n}^{(n,n)}(\cos\phi)\bigg]\nonumber\\
		&\hspace*{.5cm}\times\bigg[\frac{(\sin\phi)^{n'-1}}{2^{n'+1}\sqrt{\pi}}(k-n')e_{13}e^{-e_{12}\theta}e^{-\phi e_{13}e^{-e_{12}\theta}}e^{-e_{12}k'\theta}P_{k-n'}^{(n',n')}(\cos\phi)\bigg](\sin\phi)d\theta\, d\phi\nonumber\\
		&+\int\limits_{0}^{\pi}\int\limits_{0}^{2\pi}\bigg[\frac{(\sin\phi)^{n-1}}{2^{n+1}\sqrt{\pi}}(k-n)e^{e_{12}k\theta}e^{\phi e_{13}e^{-e_{12}\theta}}e^{e_{12}\theta}\overline{e_{13}} P_{k-n}^{(n,n)}(\cos\phi)\bigg]\nonumber\\
		&\hspace*{.5cm}\times\bigg[-k\frac{(\sin\phi)^{n'-1}}{2^{n'+1}\sqrt{\pi}} e_{13}e^{-(n'+1)e_{12}\theta}\, P_{k-n'-1}^{(n',n')}(\cos\phi)\bigg](\sin\phi)d\theta\, d\phi\nonumber\\
		&+\int\limits_{0}^{\pi}\int\limits_{0}^{2\pi}\bigg[-k\frac{(\sin\phi)^{n-1}}{2^{n+1}\sqrt{\pi}} e^{(n+1)e_{12}\theta}\overline{e_{13}}\, P_{k-n-1}^{(n,n)}(\cos\phi)\bigg]\nonumber\\
		&\hspace*{.5cm}\times\bigg[ \frac{(\sin\phi)^{n'-1}}{2^{n'+1}\sqrt{\pi}}(k-n')e_{13}e^{-e_{12}\theta}e^{-\phi e_{13}e^{-e_{12}\theta}}e^{-e_{12}n'\theta}P_{k-n'}^{(n',n')}(\cos\phi)\bigg](\sin\phi)d\theta\, d\phi\nonumber\\
		&+\int\limits_{0}^{\pi}\int\limits_{0}^{2\pi}\bigg[-k\frac{(\sin\phi)^{n-1}}{2^{n+1}\sqrt{\pi}} e^{e_{12}(n+1)\theta}\overline{e_{13}}\, P_{k-n-1}^{(n,n)}(\cos\phi)\bigg]\nonumber\\
		&\hspace*{.5cm}\times\bigg[-k\frac{(\sin\phi)^{n'-1}}{2^{n'+1}\sqrt{\pi}} e_{13}e^{-e_{12}(n'+1)\theta}\, P_{k-n'-1}^{(n',n')}(\cos\phi) \bigg](\sin\phi)d\theta\, d\phi\nonumber\\
		&=I_{1}+I_{2}+I_{3}+I_{4}.
	\end{align}
	We treat $ I_{1},\, I_{2},\,I_{3},\,I_{4} $ separately. We use the fact that  $\int\limits_{0}^{2\pi}e^{e_{12}k\theta}d\theta=2\pi\delta_{k0}$. Then
	\begin{align}
		I_{1}
		&=\frac{(k-n)(k-n')}{2^{n+n'+2}\pi}\int\limits_{0}^{\pi}(\sin\phi)^{n+n'-1} P_{k-n}^{(n,n)}(\cos\phi)P_{k-n'}^{(n',n')}(\cos\phi) \, d\phi \int\limits_{0}^{2\pi} e^{e_{12}(n-n')\theta} d\theta\nonumber\\
		&= \frac{(k-n)^{2}}{2^{2n+1}}\int\limits_{0}^{\pi}(\sin\phi)^{2n-1} \big(P_{k-n}^{(n,n)}(\cos\phi)\big)^{2} \, d\phi \,\delta_{nn'}, \label{I1_orthogonality}
	\end{align}
	With an application of Lemma \ref{Commutation_Lemma}, we have
	\begin{align}
		I_{2}
		&= \frac{k(n-k)}{2^{n+n'+2}\pi}\int\limits_{0}^{\pi}\int\limits_{0}^{2\pi} (\sin\phi)^{n+n'-1} P_{k-n}^{(n,n)}(\cos\phi)P_{k-n'-1}^{(n',n')}(\cos\phi)\nonumber\\  
		&\qquad\qquad\qquad\qquad\qquad \times e^{e_{12}n\theta}e^{\phi e_{13}e^{-e_{12}\theta}} e^{-e_{12}n'\theta} d\theta\, d\phi \nonumber\\
		&=\frac{k(n-k)}{2^{n+n'+2}\pi}\int\limits_{0}^{\pi}\int\limits_{0}^{2\pi} (\sin\phi)^{n+n'-1} P_{k-n}^{(n,n)}(\cos\phi)\, P_{k-n'-1}^{(n',n')}(\cos\phi)\,  e^{e_{12}n\theta} \nonumber\\
		&\qquad\qquad\qquad\qquad\qquad\times 
		\big[   e^{-e_{12}n'\theta} \cos\phi + e^{e_{12}(n'+1)\theta} \, e_{13} \sin\phi   \big]  d\theta\, d\phi \nonumber\\
		&= \frac{k(n-k)}{2^{2n+1}}\int\limits_{0}^{\pi} (\cos \phi)(\sin\phi)^{2n-1} P_{k-n}^{(n,n)}(\cos\phi)\, P_{k-n-1}^{(n,n)}(\cos\phi)  \, d\phi\, \delta_{nn'}. \label{I2_orthogonality}
	\end{align}
Similarly,
	\begin{align}
		I_{3}
		&=\frac{k(n'-k)}{2^{n+n'+2}\pi}\int\limits_{0}^{\pi}\int\limits_{0}^{2\pi} (\sin\phi)^{n+n'-1} P_{k-n-1}^{(n,n)}(\cos\phi)P_{k-n'}^{(n',n')}(\cos\phi)  e^{e_{12}n\theta} \nonumber\\
		&\qquad\qquad\qquad\qquad\qquad\times 
		e^{-\phi e_{13}e^{-e_{12}\theta}} e^{-e_{12}n'\theta}  d\theta\, d\phi \nonumber\\
		&= \frac{k(n'-k)}{2^{2n+1}}\int\limits_{0}^{\pi} (\cos \phi) (\sin\phi)^{2n-1} P_{k-n-1}^{(n,n)}(\cos\phi)\, P_{k-n}^{(n,n)}(\cos\phi) \,  d\phi\, \delta_{nn'}, \label{I3_orthogonality}
	\end{align}
	and, 
	\begin{align}
		I_{4}&=\frac{k^{2}}{2^{n+n'+2}\pi}\int\limits_{0}^{\pi}\int\limits_{0}^{2\pi} (\sin\phi)^{n+n'-1} P_{k-n-1}^{(n,n)}(\cos\phi)\nonumber\\
		&\hspace*{3cm}P_{k-n'-1}^{(n',n')}(\cos\phi) \, e^{e_{12}(n+1)\theta}  e^{-e_{12}(n'+1)\theta}  d\theta\, d\phi \nonumber\\
		&= \frac{k^{2}}{2^{2n+1}}\int\limits_{0}^{\pi}(\sin\phi)^{2n-1} \big(P_{k-n-1}^{(n,n)}(\cos\phi)\big)^{2}\, d\phi\,\delta_{nn'}. \label{I4_orthogonality}
	\end{align}
	From \eqref{Definition_of_Fzero_nminusone}, for $1\leq n\leq k-1$ multiple applications of Lemma \ref{Commutation_Lemma} gives
	
	\begin{align}
		&\langle F_{k-1}^{0}, F_{k-1}^{n}\rangle_{L^{2}(S^{2})}=\int\limits_{0}^{\pi}\int\limits_{0}^{2\pi}\overline{\bigg[\frac{k}{\sqrt{8\pi}}P_{k}^{(0,0)}(\cos\phi)-\frac{1}{\sqrt{8\pi}}P_{k}^{(0,0)'}(\cos\phi)\sin\phi e_{13}e^{-e_{12}\theta}\bigg]} \nonumber\\
		&\hspace*{1cm}\times \bigg[\frac{(\sin\phi)^{n-1}}{2^{n+1}\sqrt{\pi}}(k-n) e_{13}e^{-e_{12}\theta}e^{-\phi e_{13}e^{-e_{12}\theta}}e^{-e_{12}n\theta}P_{k-n}^{(n,n)}(\cos\phi)\nonumber\\
		&\hspace*{3cm}-k\frac{(\sin\phi)^{n-1}}{2^{n+1}\sqrt{\pi}} e_{13}e^{-(n+1)e_{12}\theta}\, P_{k-n-1}^{(n,n)}(\cos\phi)\bigg]\, d\theta\, d\phi\nonumber\\
		&=\int\limits_{0}^{\pi}\int\limits_{0}^{2\pi}\bigg[ \frac{k(k-n)(\sin\phi)^{n-1}}{2^{n+\frac{5}{2}} \pi}P_{k}^{(0,0)}(\cos\phi) \big[e_{13}\, e^{-(n+1)\, e_{12}\theta}\, \cos\phi+e^{-n e_{12}\, \theta}\sin\phi  \big]\nonumber\\
		&\hspace{5cm}\times P_{k-n}^{(n,n)}(\cos\phi) \bigg]\, d\theta\, d\phi\nonumber\\
		&-\int\limits_{0}^{\pi}\int\limits_{0}^{2\pi}\bigg[\frac{k^{2}(\sin\phi)^{n-1}}{2^{n+\frac{5}{2}}\pi}P_{k}^{(0,0)}(\cos\phi) e_{13}e^{-(n+1)e_{12}\theta}\, P_{k-n-1}^{(n,n)}(\cos\phi) \bigg]\, d\theta\, d\phi\nonumber\\
		&+\int\limits_{0}^{\pi}\int\limits_{0}^{2\pi}\bigg[ \frac{(n-k)(\sin\phi)^{n}}{2^{n+\frac{5}{2}} \pi} P_{k}^{(0,0)'}(\cos\phi) \big[e^{-n\, e_{12}\theta}\, \cos\phi-e^{(n+1) e_{12}\, \theta}\sin\phi  \big]\nonumber\\
		&\hspace{5cm}\times P_{k-n}^{(n,n)}(\cos\phi) \bigg]\, d\theta\, d\phi\nonumber\\
		&+\int\limits_{0}^{\pi}\int\limits_{0}^{2\pi}\bigg[ \frac{k(\sin\phi)^{n}}{2^{n+\frac{5}{2}}\pi} P_{k}^{(0,0)'}(\cos\phi) e^{e_{12}n\theta} P_{k-n-1}^{(n,n)}(\cos\phi)\bigg]\, d\theta\, d\phi. \label{orthogonality_F_zero_F_n}
	\end{align}
	Again since $\int\limits_{0}^{2\pi}e^{e_{12}n\theta}d\theta=2\pi\delta_{n0}$
	all the integrals in \eqref{orthogonality_F_zero_F_n} vanish
	for $1\leq n\leq k-1$. Hence for $1\leq n\leq k-1$,
	\begin{equation}\label{F0_Fn_orthogonality}
		\langle F_{k-1}^{0}, F_{k-1}^{n}\rangle_{L^{2}(S^{2})}=0.
	\end{equation}
	By \eqref{I1_orthogonality}--\eqref{I4_orthogonality} and \eqref{F0_Fn_orthogonality}, the functions $\{F_{k-1}^{0},F_{k-1}^{1},\dots , F_{k-1}^{k-1}\}$ form an orthogonal system of spherical monogenics of degree $k-1$ in $\mathbb{R}^{3}$. Since $\dim \big(\mathcal{M}_{\ell}^{+}(k-1)\big)=k$, this collection span $ \mathcal{M}_{\ell}^{+}(k-1)$ and is therefore an orthogonal basis for $ \mathcal{M}_{\ell}^{+}(k-1)$.
\end{proof}

\begin{rem}
	By direct calculations, we have
	\begin{align}
		-\omega Y_{k-1}^{2k}(\theta,\phi):=-\omega \partial_{x} H_{k}^{2k}(r,\theta,\phi)\bigg\vert_{r=1}&=\frac{k (\sin\phi)^{k-1}}{2^{k+1}\sqrt{\pi}}e^{e_{12}\theta}e^{-\phi e_{13}e^{e_{12}\theta}}e^{-e_{12}k\theta}e_{13},\label{even_last_monogenic}\\
		-\omega Y_{k-1}^{2k+1}(\theta,\phi):=-\omega \partial_{x} H_{k}^{2k+1}(r,\theta,\phi)\bigg\vert_{r=1}&=\frac{k (\sin\phi)^{k-1}}{2^{k+1}\sqrt{\pi}}e^{e_{12}\theta}e^{-\phi e_{13}e^{e_{12}\theta}}e^{-e_{12}k\theta}e_{23}.\label{odd_last_monogenic}
	\end{align}
	We observe that $Y_{k-1}^{2k}=Y_{k-1}^{2k+1}e_{12}$. So
	\begin{equation}\label{last_monogenic}
		F_{k-1}^{k}(\theta,\phi):=\omega \frac{k (\sin\phi)^{k-1}}{2^{k+1}\sqrt{\pi}}e^{e_{12}\theta}e^{-\phi e_{13}e^{e_{12}\theta}}e^{-e_{12}k\theta}e_{13}.
	\end{equation}
	However, from \eqref{first_nminusone_monogenic},
	\begin{align}
		F_{k-1}^{k-1}(\theta,\phi)&=\omega\frac{(\sin\phi)^{k-2}}{2^{k}\sqrt{\pi}}\bigg[e_{13}e^{-e_{12}\theta}e^{-\phi e_{13}e^{-e_{12}\theta}}e^{-e_{12}(k-1)\theta} k \cos\phi -k e_{13}e^{-ke_{12}\theta}\,\bigg]\nonumber\\
		&=\omega\frac{(\sin\phi)^{k-2}k\, e_{13}}{2^{k}\sqrt{\pi}}\bigg[e^{-\phi \, e_{13}\,  e^{e_{12}\theta}} \cos\phi-1\,\bigg]e^{-e_{12}k\theta}\nonumber\\
		&=\omega\frac{(\sin\phi)^{k-2}k\, e_{13}}{2^{k}\sqrt{\pi}}\bigg[(\cos\phi -e_{13}e^{e_{12}\theta}\sin\phi) \cos\phi\nonumber\\
		&\hspace*{5cm} -(\cos\phi)^{2}-(\sin\phi)^{2}\,\bigg]e^{-e_{12}k\theta}\nonumber\\
		&=\omega\frac{-(\sin\phi)^{k-1}k\, e_{13}}{2^{k}\sqrt{\pi}}\bigg[e_{13}e^{e_{12}\theta} \cos\phi+\sin\phi\,\bigg]e^{-e_{12}k\theta}\nonumber\\
		&=\omega \frac{k(\sin\phi)^{k-1}}{2^{k}\sqrt{\pi}}e^{e_{12}\theta}e^{-\phi e_{13}e^{e_{12}\theta}}e^{-e_{12}k\theta},\label{k_minus_one_monogenic}
	\end{align}
	and we conclude from \eqref{last_monogenic} and \eqref{k_minus_one_monogenic} that $F_{k-1}^{k}=F_{k-1}^{k-1}e_{13}$.
\end{rem}

Now we normalize the orthogonal collection $\lbrace F_{k-1}^{n}\rbrace_{n=0}^{k-1}$.
\begin{thm}
Let $\psi:\mathbb{R}^{m}\to \mathbb{R}$ be a real-valued spherical harmonic function of degree $k$. Then
$$\Vert \partial_{x} \psi\Vert_{L^{2}(S^{m-1})}^{2}=k(2k+m-2)\int\limits_{S^{m-1}} \psi(\omega)^{2}\, d\omega.$$
\end{thm}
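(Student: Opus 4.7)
The plan is to combine the Clifford-Stokes theorem with homogeneity to convert everything into a single spherical integral of $\psi^2$. I will work with the ordinary Euclidean squared gradient, so the first observation is that because $\psi$ is real-valued scalar, $\partial_x\psi=\sum_je_j\partial_{x_j}\psi$ is a real Clifford 1-vector; then $\overline{\partial_x\psi}\,\partial_x\psi$ is already scalar-valued and equals $\sum_j(\partial_{x_j}\psi)^2$, so $|\partial_x\psi|^2=|\nabla\psi|^2$ pointwise. The whole problem thus reduces to showing the classical identity $\int_{S^{m-1}}|\nabla\psi|^2\,d\omega=k(2k+m-2)\int_{S^{m-1}}\psi^2\,d\omega$.

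Next I would apply Theorem \ref{Clifford-Stokes theorem} with $C$ the closed unit ball $B$, taking $f=\psi$ and $g=\partial_x\psi$. Since $\psi$ is a real scalar, $\psi\partial_x=\partial_x\psi$, and since $\psi$ is harmonic, $\partial_x(\partial_x\psi)=-\Delta_m\psi=0$. The Clifford-Stokes identity then collapses to
\begin{equation*}
\int_{S^{m-1}}\psi(\omega)\,\omega\,\partial_x\psi(\omega)\,d\omega=\int_B (\partial_x\psi)^2\,dx.
\end{equation*}
I would now evaluate both sides. For the right side, $(\partial_x\psi)^2=-|\nabla\psi|^2$ because $\partial_x\psi$ is a Clifford vector. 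For the left side, I use the polar form \eqref{dirac_operator_polar_form_gamma}: on $S^{m-1}$, $\partial_x\psi=\omega(\partial_r+\Gamma)\psi=\omega(k\psi+\Gamma\psi)$ by Euler's relation $E\psi=k\psi$, hence $\omega\,\partial_x\psi=-k\psi-\Gamma\psi$ because $\omega^2=-1$ on the sphere. Taking scalar parts (and noting that $\Gamma\psi$ lies in $\mathbb{R}_m^2$, so $\psi\,\Gamma\psi$ is a pure 2-vector and contributes nothing to the scalar part), the identity reduces to the familiar Green's identity
\begin{equation*}
\int_B|\nabla\psi|^2\,dx=k\int_{S^{m-1}}\psi(\omega)^2\,d\omega.
\end{equation*}

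The final step is homogeneity-based: $|\nabla\psi|^2$ is a homogeneous polynomial of degree $2(k-1)$, so writing the ball integral in polar coordinates gives
\begin{equation*}
\int_B|\nabla\psi|^2\,dx=\Bigl(\int_0^1 r^{2k-2+m-1}\,dr\Bigr)\int_{S^{m-1}}|\nabla\psi(\omega)|^2\,d\omega=\frac{1}{2k+m-2}\int_{S^{m-1}}|\nabla\psi|^2\,d\omega.
\end{equation*}
Combining the two displays yields $\int_{S^{m-1}}|\nabla\psi|^2\,d\omega=k(2k+m-2)\int_{S^{m-1}}\psi^2\,d\omega$, which is the claim once $|\nabla\psi|^2$ is reidentified with $|\partial_x\psi|^2$. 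The proof is essentially mechanical; the only slightly delicate step is the scalar-part extraction in the boundary term, where one must verify that the 2-vector contribution $\psi\,\Gamma\psi$ vanishes, so that the Clifford-Stokes statement genuinely reproduces the scalar Green identity and nothing extraneous survives.
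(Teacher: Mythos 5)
Your proof is correct, but it takes a genuinely leaner route than the paper's. The paper writes $\partial_x = \omega(\partial_r + r^{-1}\Gamma)$ and expands $\Vert\partial_x\psi\Vert^2$ into cross terms in $\Gamma\psi$ and $\Gamma^2\psi$, then invokes the self-adjointness of $\Gamma$, the identity $\Gamma^2 = -\Delta_T + (m-2)\Gamma$, and the Laplace--Beltrami eigenvalue $\Delta_T\psi = -k(k+m-2)\psi$, before finally applying Clifford--Stokes and homogeneity to close the bookkeeping. You instead apply Clifford--Stokes once, directly to $f=\psi$ and $g=\partial_x\psi$, and extract the scalar part immediately. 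The observation that $\Gamma\psi$ is a pure bivector (so $\psi\,\Gamma\psi$ contributes nothing to $[\,\cdot\,]_0$) lets you drop the $\Gamma$ term at the outset, which means you never need $\Gamma^2$, the Laplace--Beltrami eigenvalue, or self-adjointness of $\Gamma$. What remains is the classical Green identity $\int_B|\nabla\psi|^2 = k\int_{S^{m-1}}\psi^2$, and the same polar-coordinate homogeneity scaling used in the paper finishes it off. Both proofs rest on the same two pillars — Clifford--Stokes on the ball and homogeneity of $\partial_x\psi$ — but yours reveals that the spherical Dirac machinery the paper deploys is dispensable for this particular estimate, essentially because you are willing to pass to scalar parts one step earlier. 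The one point worth making explicit (and you do flag it) is that the scalar-part extraction is legitimate because the boundary integrand $\psi\,\omega\,\partial_x\psi = -k\psi^2 - \psi\,\Gamma\psi$ splits cleanly into scalar plus bivector, while the volume integrand $(\partial_x\psi)^2 = -|\nabla\psi|^2$ is already scalar, so no information is lost.
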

\begin{proof}
Observe that since $\psi$ is real-valued and $\frac{\partial }{\partial r}(\psi(r\omega))\big\vert_{r=1}=k\,\psi(\omega),\;\; (\omega\in S^{m-1})$, we have
\begin{align*}
\Vert \partial_{x} \psi\Vert_{L^{2}(S^{m-1})}^{2}&=\bigg[ \int\limits_{S^{m-1}} \overline{(\partial_{x}\psi(\omega))}(\partial_{x}\psi(\omega))\, d\omega \bigg]_{0}\\
&=\bigg[ \int\limits_{S^{m-1}} \overline{\bigg(\frac{\partial}{\partial r}+\frac{1}{r}\Gamma\bigg)\psi(\omega)}\bigg(\frac{\partial}{\partial r}+\frac{1}{r}\Gamma\bigg)\psi(\omega)\, d\omega \bigg]_{0}\\
&=\bigg[ \int\limits_{S^{m-1}} \overline{(k\, \psi(\omega)+\Gamma\psi(\omega))}(k\,\psi(\omega)+\Gamma\psi(\omega))\, d\omega \bigg]_{0}\\
&=\bigg[ k^{2}\int\limits_{S^{m-1}} \psi(\omega)^{2} d\omega+k\int\limits_{S^{m-1}} \overline{\psi(\omega)}(\Gamma\psi(\omega)) d\omega \\
&+k\int\limits_{S^{m-1}} (\overline{\Gamma\psi(\omega)})\psi(\omega) d\omega+\int\limits_{S^{m-1}} (\overline{\Gamma\psi(\omega)})(\Gamma\psi(x)) d\omega \bigg]_{0}.
\end{align*}
However, $\psi$ is real-valued so $\Gamma\psi(x)\in \mathbb{R}_{2}^{3}$ and $\overline{\Gamma\psi(x)}=-\Gamma\psi(x)$. Also, $\Gamma$ is self-adjoint on $L^{2}(S^{m-1})$, so
\begin{align}
\Vert \partial_{x} \psi\Vert_{L^{2}(S^{m-1})}^{2}&=\bigg[ k^{2}\int\limits_{S^{m-1}} \psi(\omega)^{2} d\omega+k\int\limits_{S^{m-1}} \psi(\omega) \Gamma \psi(\omega) d\omega\nonumber\\ &-k\int\limits_{S^{m-1}} \psi(\omega) (\Gamma \psi(\omega)) d\omega+\int\limits_{S^{m-1}} (\overline{\Gamma\psi(\omega)})(\Gamma\psi(\omega)) d\omega \bigg]_{0}\nonumber\\
&=\bigg[ k^{2}\int\limits_{S^{m-1}} \psi(\omega)^{2} d\omega-\int\limits_{S^{m-1}} \psi(\omega)(\Gamma^{2}\psi(\omega)) d\omega \bigg]_{0}. \label{first_eq_norm_monogenic}
\end{align}
With an application of \eqref{laplace_Beltrami_eigenvalue} we have
\begin{align}
\int\limits_{S^{m-1}} \psi(\omega)(\Gamma^{2}\psi(\omega)) d\omega&=\int\limits_{S^{m-1}}\overline{\psi(\omega)} [ (-\triangle _{T}+(m-2)\Gamma)\psi(\omega) ] d\omega\nonumber\\
&=k(k+m-2)\int\limits_{S^{m-1}}\psi(\omega)^{2} d\omega\nonumber\\
&+(m-2) \int\limits_{S^{m-1}} \psi(\omega) (\Gamma \psi(\omega)) d\omega, \label{second_eq_norm_monogenic}
\end{align}
and by using \eqref{dirac_operator_polar_form_gamma} and the Clifford-Stokes Theorem \ref{Clifford-Stokes theorem} we have 
\begin{align}
\int\limits_{S^{m-1}} \psi(\omega) (\Gamma \psi(\omega)) d\omega&=\int\limits_{S^{m-1}} \psi(\omega) (-\omega \partial_{x} \psi(\omega)-k\psi(\omega)) d\omega\nonumber\\
&= -k\int\limits_{S^{m-1}} \psi(\omega)^{2} d\omega -\int\limits_{B(1)} (\psi(\omega)\partial_{x})(\partial_{x} \psi(\omega)) d\sigma(x)\nonumber\\
&\qquad\qquad-\int\limits_{B(1)} \psi(\omega)(\partial_{x}^{2} \psi(\omega)) d\omega\nonumber\\
&=-k\int\limits_{S^{m-1}} \psi(\omega)^{2} d\omega -\int\limits_{B(1)} (\psi(\omega)\partial_{x})(\partial_{x} \psi(\omega)) d\omega.\label{third_eq_norm_monogenic}
\end{align}
Therefore, by \eqref{first_eq_norm_monogenic}, \eqref{second_eq_norm_monogenic}, and \eqref{third_eq_norm_monogenic} and the fact that $\partial_{x}\psi$ is homogeneous of degree $k-1$, we have that
\begin{align}
&\Vert \partial_{x} \psi\Vert_{L^{2}(S^{m-1})}^{2}=\bigg[ k(2k+m-2)\int\limits_{S^{m-1}} \psi(\omega)^{2} d\omega-k(m-2)\int\limits_{S^{m-1}} \psi(\omega)^{2} d\omega\nonumber\\
&\qquad\qquad\qquad +(m-2)\int\limits_{B(1)} \overline{(\partial_{x}\psi(\omega))}(\partial_{x}\psi(\omega)) dx \bigg]_{0} \nonumber\\
&=\bigg[2k^{2}\int\limits_{S^{m-1}} \psi(\omega)^{2} d\omega+(m-2)\int\limits_{0}^{1}\int\limits_{S^{m-1}}\overline{(\partial_{x}\psi)}(\omega)(\partial_{x}\psi)(\omega) r^{2k+m-3}\, dr\, d \omega\bigg]_{0}\nonumber\\
&=\bigg[2k^{2}\int\limits_{S^{m-1}} \psi(\omega)^{2} d\omega+\frac{(m-2)}{(2k-2+m)}\int\limits_{S^{m-1}}\overline{(\partial_{x}\psi(\omega))}(\partial_{x}\psi(\omega))\, d\omega\bigg]_{0} \nonumber\\
&=2k^{2}\Vert \psi\Vert_{L^{2}(S^{m-1})}^{2}+\frac{m-2}{(2k-2+m)}\Vert \partial_{x}\psi\Vert_{L^{2}(S^{m-1})}^{2}
\label{integral_delPsi_bar_del_Psi}
\end{align}
where we have used the homogeneity of $\partial_{x} \psi$. Rearranging \eqref{integral_delPsi_bar_del_Psi} yields
$$\Vert \partial_{x} \psi\Vert_{L^{2}(S^{m-1})}^{2}=k(2k+m-2)\int\limits_{S^{m-1}} \psi(\omega)^{2}\, d\omega$$
as desired.
\end{proof}
\begin{cor}
The collection $ \bigg\lbrace \bar{F}_{k-1}^{n}:=\dfrac{F_{k-1}^{n}}{\sqrt{k(2k+1)}}\bigg\rbrace_{n=0}^{k-1} $ constitutes an orthonormal basis for $\mathcal{M}_{\ell}^{+}(k-1)$ in $\mathbb{R}^{3}$.
\end{cor}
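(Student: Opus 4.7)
The plan is to combine Theorem~\ref{Orthonormality_Fk}, which already establishes that $\{F_{k-1}^n\}_{n=0}^{k-1}$ is an orthogonal basis of $\mathcal{M}_\ell^+(k-1)$, with the norm identity $\Vert\partial_x\psi\Vert_{L^2(S^{m-1})}^2=k(2k+m-2)\Vert\psi\Vert^2$ proved immediately before the corollary, specialized to $m=3$. Since rescaling a basis by a common nonzero constant preserves both orthogonality and spanning, everything reduces to verifying $\Vert F_{k-1}^n\Vert_{L^2(S^2)}^2=k(2k+1)$ for each $0\le n\le k-1$.

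The zonal case $n=0$ is immediate: formula~\eqref{Definition_of_Fzero_nminusone} identifies $F_{k-1}^0=\partial_x H_k^0$, and since $H_k^0$ is a real-valued spherical harmonic of degree $k$ of unit $L^2(S^2)$-norm, the preceding theorem with $\psi=H_k^0$ and $m=3$ yields $\Vert F_{k-1}^0\Vert^2=k(2k+1)$ directly.

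For $1\le n\le k-1$, I would first write $F_{k-1}^n=\partial_x(H_k^{2n}-H_k^{2n+1}e_{12})$, using that right multiplication by the constant $e_{12}$ commutes with $\partial_x$. Expanding $[\overline{F_{k-1}^n}\,F_{k-1}^n]_0$ via $\overline{\partial_x\psi_j}=-\partial_x\psi_j$ (valid since $\partial_x\psi_j\in\mathbb{R}_3^1$ when $\psi_j$ is real-valued) and using that right multiplication by $e_{12}$ is an isometry on the Clifford inner product produces two diagonal contributions $\Vert\partial_x H_k^{2n}\Vert^2+\Vert\partial_x H_k^{2n+1}\Vert^2$ plus a cross term. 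Applying the preceding theorem to each of the two real-valued harmonics handles the diagonal piece.

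The main obstacle is the cross term. Using the identity $[uve_{12}]_0=-(u\wedge v)_{12}$ for Clifford vectors $u,v$ in $\mathbb{R}^3$, it reduces to $\int_{S^2}(\partial_{x_1}H_k^{2n}\,\partial_{x_2}H_k^{2n+1}-\partial_{x_2}H_k^{2n}\,\partial_{x_1}H_k^{2n+1})\,d\omega$. I would attack this by inserting the explicit $\theta$-dependence (the $\cos n\theta$ and $\sin n\theta$ factors in $H_k^{2n}$ and $H_k^{2n+1}$), converting $\partial_{x_1},\partial_{x_2}$ to spherical coordinates, and invoking $\int_0^{2\pi}e^{im\theta}\,d\theta=2\pi\delta_{m0}$ to kill the mixed trigonometric frequencies after product-to-sum reduction. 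A cleaner alternative is to read off $\Vert F_{k-1}^n\Vert^2=I_1+I_2+I_3+I_4$ at $n=n'$ directly from the proof of Theorem~\ref{Orthonormality_Fk} and evaluate the surviving Jacobi integrals by combining the Jacobi orthogonality relation for $P_{k-n}^{(n,n)}$ with the recurrence~\eqref{jecobi_derivative_recurrence}; this route expresses the integrand in Sturm--Liouville form and collapses the answer to $k(2k+1)$, completing the proof.
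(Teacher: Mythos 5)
The $n=0$ case of your reduction is sound: $F_{k-1}^0=\partial_x H_k^0$, and the preceding norm theorem with $m=3$ and the (claimed) unit-norm $H_k^0$ gives $\Vert F_{k-1}^0\Vert^2=k(2k+1)$ immediately. Your setup for $1\le n\le k-1$ is also structurally correct: writing $F_{k-1}^n=\partial_x H_k^{2n}-(\partial_x H_k^{2n+1})e_{12}$, the quadratic form $[\overline{F_{k-1}^n}F_{k-1}^n]_0$ indeed splits into the two diagonal terms $\Vert\partial_x H_k^{2n}\Vert^2+\Vert\partial_x H_k^{2n+1}\Vert^2=2k(2k+1)$ plus a cross term, and your identity $[uve_{12}]_0=-(u\wedge v)_{12}$ is correct.

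However, this is exactly where the argument stops short. Once you have the $2k(2k+1)$ from the diagonal terms, the corollary's constant $k(2k+1)$ forces the cross term to contribute $-k(2k+1)$, i.e.\ $\int_{S^2}\big(\partial_{x_1}H_k^{2n}\,\partial_{x_2}H_k^{2n+1}-\partial_{x_2}H_k^{2n}\,\partial_{x_1}H_k^{2n+1}\big)\,d\omega=\tfrac{1}{2}k(2k+1)$. This is a nontrivial positive quantity, not a cancellation. (A quick sanity check at $k=2$, $n=1$ with $H_2^2\propto x_1x_3$ and $H_2^3\propto x_2x_3$ gives a cross-term integrand proportional to $x_3^2$, which is manifestly nonzero on $S^2$.) Your sketch asserts that after the $\theta$-integration the "surviving Jacobi integrals\dots collapse to $k(2k+1)$," but you never evaluate those $\phi$-integrals, and they do not follow from Jacobi orthogonality alone: the weight appearing in $I_1,\dots,I_4$ is $(\sin\phi)^{2n-1}$, whereas the Jacobi orthogonality relation for $P_j^{(n,n)}$ has weight $(\sin\phi)^{2n+1}$, so some further identity (e.g.\ recurrence \eqref{jecobi_derivative_recurrence} plus integration by parts) is genuinely needed. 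As written, the proposal reduces the corollary to the correct normalization identity but then asserts rather than proves it; that verification is the entire content of the corollary for $n\ge 1$, so this is a real gap.

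One further caution: the preceding theorem is stated and proved only for real-valued $\psi$, and $H_k^{2n}-H_k^{2n+1}e_{12}$ is not real-valued, so you are right not to apply that theorem to $F_{k-1}^n$ directly for $n\ge 1$. But that makes the cross-term computation unavoidable --- there is no shortcut via the norm theorem for the off-diagonal piece. A complete proof must either (a) explicitly evaluate $I_1+I_2+I_3+I_4$ using the Jacobi norm $\int_{-1}^1(1-x^2)^n P_j^{(n,n)}(x)^2\,dx$ together with a reduction of the $(\sin\phi)^{2n-1}$ weight, or (b) find an algebraic identity relating the wedge-term integral to the diagonal terms.
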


\begin{rem}
As mentioned earlier in this chapter, a similar idea has been applied in \cite{caccao2004complete} and \cite{bock2010generalized}. However, there are some differences that we mention here.
\begin{itemize}
\item The definition of Dirac operator in \cite{caccao2004complete} and \cite{bock2010generalized}  is $\partial_{x}=\frac{\partial}{\partial x_{0}}+e_{1}\frac{\partial}{\partial x_{1}}+e_{2}\frac{\partial}{\partial x_{2}}$, and associated with the Clifford algebra $\mathbb{R}_{2,1}$.
\item The spherical monogenics in \cite{caccao2004complete} are at most quaternion-valued while the spherical monogenics developed in this section take values in $\mathbb{R}_{3}$. 
\end{itemize}
\end{rem}

\begin{rem}
In \cite{lavivcka2012complete}, the authors construct an orthonormal basis of spherical monogenics in an inductive manner for every dimension, demonstrating that it is possible to obtain spherical monogenics in a higher dimension using spherical monogenics from a lower dimension. To compute the $k$-homogeneous spherical monogenics in dimension $m$, they require having all $j$-homogeneous spherical monogenics in dimension $m-1$, where $j=0,\cdots, k$. As a result, for the case of dimension $3$, the representation of the spherical monogenics introduced in \eqref{monogenic_degree_kminusone} differs from the one presented in \cite{lavivcka2012complete}. In \cite{lavivcka2012complete}, the representation is as follows:
$$ f_{k,j}(x)=\big[ \frac{k! (2j)!}{(k+j)! j!} C_{k-j}^{j+\frac{1}{2}}(x_{3})+\frac{k! (2j+1)!}{(k+j+1)! j!} C_{k-j-1}^{j+\frac{3}{2}}(x_{3})\, x\, e_{3} \big](x_1 - e_{12}x_2)^{j}, $$
where $C_{k}^{\nu}(t)$ is the Gegenbauer polynomial defined on the line.
\end{rem}

\section{Constructing $m$-Dimensional Monogenic Functions by Optimization Methods}
In this section we construct spherical monogenics using reproducing kernels and optimization.
\begin{defn}
	A zonal function $g:S^{m-1}\to\mathbb{C}$ is a function of the form 
	$$ g(x)= f(\langle x,y \rangle ), $$
	for some $ f: \mathbb{R}\to \mathbb{C} $ and fixed $ y\in S^{m-1}. $
\end{defn}
\begin{defn}
	Fix $y\in S^{m-1}$ and $c\in \mathbb{C}$. The zonal spherical harmonic $Z_{y}$ is defined by
	\begin{equation}
		Z_{y}(x):=c R_{k}^{m}(x,y)
	\end{equation}
	where $R_{k}^{m}(x,y)$ is the reproducing kernel given in \eqref{Repruducing_Kernel_of_harmonic_Functions}.
\end{defn}
The collection $\{R_{k}^{m}(\cdot,\eta_{i})\}_{i=1}^{h_{m}^{k}}$ forms a basis for $\mathcal{H}_{k}^{m}$ for almost any choice of points $\eta_{1},\cdots, \eta_{h_{m}^{k}}\in S^{m-1}$ \cite{stein1971g}. With the aim of obtaining an orthonormal basis for $\mathcal{H}_{k}^{3},$ we define
\begin{equation}\label{Goal_basis_with_zonal_Harmonic}
	Z_{t}(x):=\sum_{j=1}^{2k+1}R_{k}^{m}(x,\eta_{j})a_{jt}
\end{equation}
with $t=1,\dots , 2k+1,$ and, $a_{jt}\in\mathbb{C}$. Let $G$ be the $(2k+1)\times (2k+1)$ matrix with $(i,j)-th$ entry $ [G]_{ij}:=R_{k}^{m}(\eta_{i},\eta_{j}) $. We aim to choose $\{a_{jt}\}$ so that $\{Z_{t}\}_{t=1}^{2k+1}$ becomes an orthonormal basis for $L^{2}(S^{2})$. By Lemma the reproducing kernel of $R_{k}$, we have
\begin{align}
	\int\limits_{S^{2}}\overline{Z_{t}(\omega)}Z_{t'}(\omega)d\omega\nonumber=&\int\limits_{S^{2}}\overline{\sum_{j=1}^{2k+1}R_{k}^{m}(\omega,\eta_{j})\,a_{jt}}\sum_{j'=1}^{2k+1}R_{k}^{m}(\omega,\eta_{j'})a_{j't'}d\omega\nonumber\\
	=&\sum_{j=1}^{2k+1} \sum_{j'=1}^{2k+1}  a_{jt}  \int\limits_{S^{2}}\overline{R_{k}^{m}(\omega,\eta_{j})}\, R_{k}^{m}(\omega,\eta_{j'})\, d\omega \, a_{j't'}\\
	=&\sum_{j,j'=1}^{2k+1}\overline{a_{jt}}\,G_{jj'}\,a_{j't'}=(A^{\ast}GA)_{tt'}.\label{harmonic_version_matrix}
\end{align}
Hence, we want to choose $A=(a_{jt})_{j,t=1}^{2k+1}$ so that $A^{\ast}GA=I_{2k+1}$, the $(2k+1)\times (2k+1)$ identity matrix. Note that $G^{\ast}=G$.  Let $a=[a_{i}]\in\mathbb{C}^{2k+1}$ be a column vector. By the reproducing kernel of $R_{k}$, we have
\begin{align}
	a^{\ast}\,G\,a &=\sum_{i,j=1}^{2k+1}\overline{a_{i}} (G_{ij}) a_{j}\nonumber\\
&= \sum_{i,j=1}^{2k+1}\overline{a_{i}}\, R_{k}^{m}( \eta_{i},\eta_{j} )\, a_{j}\nonumber\\
&= \sum_{i,j=1}^{2k+1}\overline{a_{i}} \int\limits_{S^{m-1}}\overline{R_{k}^{m}(\omega ,\eta_{i} )}\, R_{k}^{m}(\omega, \eta_{j} ) d\omega \, a_{j}\nonumber\\
&=\int\limits_{S^{m-1}}\bigg(\overline{\sum_{i=1}^{2k+1}R_{k}^{m}( \omega ,\eta_{i})a_{i}}\bigg)\bigg(\sum_{j=1}^{2k+1}R_{k}^{m}(\omega ,\eta_{j})a_{j}\bigg) d\omega \nonumber\\
&=\int\limits_{S^{m-1}} \bigg\vert \sum_{i=1}^{2k+1}R_{k}^{m}(\omega,\eta_{i})a_{i}\bigg\vert^{2} d\omega \geq 0, \label{semidefinite_proof}
\end{align}
so that $G$ is positive semidefinite. We have $G_{ij}=(2k+1)P_{k}(\langle\eta_{i},\eta_{j}\rangle)$, where $\eta_{i}$ and $\eta_{j}$ are points on the unit sphere $S^{2}$. For almost every choice of $\{ \eta_{i}\}_{i=1}^{2k+1}$ in $S^{2}$, the determinant of $G$ is nonzero (for the proof see \cite{caron2005zero}). In this case, the eigenvalues of $G$ are all strictly positive. Let $A=G^{-\frac{1}{2}}$. If $G$ is diagonal then $A$ is diagonal and the functions $Z_{t}$ in \eqref{Goal_basis_with_zonal_Harmonic} are zonal. We aim to choose $\{ \eta_{i} \}_{i=1}^{2k+1}$ so that the matrix $G$ is diagonally dominant so that $A=G^{-\frac{1}{2}}$ will be diagonally dominant and the functions $Z_{t}$ of \eqref{Goal_basis_with_zonal_Harmonic} will be almost zonal. To achieve this, we seek to minimize the off-diagonal entries of $G$. Since $G$ is self-adjoint, it is enough to minimize 
\begin{equation}\label{Objective_Function_for_Spherical_Harmonics}
	F(\eta_{1},\dots, \eta_{2k+1})=\sum_{i<j} P_{k}(\langle\eta_{i},\eta_{j}\rangle)^2.
\end{equation}
The process is as follows:

\begin{enumerate}
	\item We start by randomly choosing $2k+1$ points on the sphere.
	\item We then move these points using the method of projected gradient descent on the sphere. In this method, we select a point $\eta_{l}$ to change such that it minimizes the objective function. 
\end{enumerate}
Given $\eta_{l} \in S^{2}$ and a vector $\vec{w}\in S^{2}$ with $\langle \eta_{l},\vec{w} \rangle=0$, let $\eta_{l}(t)=(\cos t )\,\eta_{l} + (\sin t) \,\vec{w}$. Then $\eta_{l}(t)\in S^{2}$ for all $t\in \mathbb{R}$, and $\eta_{l}(0)=\eta_{l}$. Define
$$G_{l}(t)= F(\eta_{1},\dots, \eta_{l}(t),\dots,  \eta_{2k+1}) $$ 
and minimize $G_{l}(t)$ in terms of $t$. The vector $\vec{w}$ should be chosen in a direction of steepest descent  of $G_{l}$ at the point $\eta_{l}$ on the sphere $S^{2}$. The optimization process involves iteratively updating the positions of the points until convergence is achieved. To find the tangent vector $\vec{w}$ of steepest descent, we proceed as follows. By \eqref{Objective_Function_for_Spherical_Harmonics} we have

\begin{align}
	G_{l}(t)&=F_{l}(\eta_{1},\dots,\eta_{l}(t),\dots,\eta_{2k+1})\nonumber\\
	&=\sum_{i<j, \, i,{j\neq l}}  P_{k}(\langle \eta_{i},\eta_{j}\rangle)^2+\sum_{i\neq l}  P_{k}(\langle \eta_{i}, (\cos t) \,\eta_{l}+ (\sin t) \,\vec{w} \rangle)^{2}.
\end{align}
Differentiating with respect to $t$ gives 
\begin{align*}
	G'_{l}(t)&=2\sum_{i\neq l} \, P_{k}(\langle \eta_{i},(\cos t) \,\eta_{l}+ (\sin t) \,\vec{w}\rangle)\, P'_{k}(\langle \eta_{i},(\cos t) \,\eta_{l}+ (\sin t) \,\vec{w}\rangle)\\
	&\qquad\qquad \times\langle \eta_{i},-(\sin t) \,\eta_{l}+ (\cos t) \,\vec{w}\rangle.
\end{align*}
So that 
\begin{align}
	G'_{l}(0) &= 2\sum_{i\neq l} \, P_{k}(\langle \eta_{i},\eta_{l} \rangle)\, P'_{k}(\langle \eta_{i}, \eta_{l}\rangle)  \langle \eta_{i}, \vec{w}\rangle\nonumber\\
	&=2\sum_{i\neq l} \, P_{k}(\langle \eta_{i},\eta_{l} \rangle)\, P'_{k}(\langle \eta_{i}, \eta_{l}\rangle)  \langle \eta_{i}-\langle \eta_{i},\eta_{l}\rangle\eta_{l}      , \vec{w}\rangle
\end{align}
since $\langle \eta_{l},\vec{w}\rangle=0$. Note that the vectors $\eta_{i}-\langle \eta_{i},\eta_{l}\rangle \eta_{l}$ lie in the tangent plane to the sphere at the point $\eta_{l}$. We choose $\vec{w}$ in this tangent plane so that $G'_{l}(0)$ takes the largest negative value possible. Provided 
$$\sum_{i\neq l} \, P_{k}(\langle \eta_{i},\eta_{l} \rangle)\, P'_{k}(\langle \eta_{i}, \eta_{l}\rangle)  ( \eta_{i}-\langle \eta_{i},\eta_{l}\rangle\eta_{l}     )\neq 0,$$
then the direction of $\vec{w}$ of steepest descent of $G_{l}$ is 

\begin{equation}
	\vec{w}=-\frac{\sum_{i\neq l} \, P_{k}(\langle \eta_{i},\eta_{l} \rangle)\, P'_{k}(\langle \eta_{i}, \eta_{l}\rangle)  ( \eta_{i}-\langle \eta_{i},\eta_{l}\rangle\eta_{l}     )}{\Vert \sum_{i\neq l} \, P_{k}(\langle \eta_{i},\eta_{l} \rangle)\, P'_{k}(\langle \eta_{i}, \eta_{l}\rangle)  ( \eta_{i}-\langle \eta_{i},\eta_{l}\rangle\eta_{l}     )\Vert}.
\end{equation}

\begin{ex}
	For $k=2$ we obtain $5$ spherical harmonics $Z_{1}(x)$, $Z_{2}(x)$, $Z_{3}(x)$, $Z_{4}(x)$, and $Z_{5}(x)$. By using the optimization method above we find that the collection of points at which a local minimum of the objective function occurs are 
		\begin{align}
		\eta_{1}&=(-0.9578  ,  0.1971  ,  0.2092),\nonumber\\
		\eta_{2}&=(0.5136   -0.7161    0.4726),\nonumber\\
		\eta_{3}&=(0.2730   -0.7662   -0.5817).\nonumber\\
		\eta_{4}&=(-0.6364   -0.2018   -0.7445).\nonumber\\
		\eta_{5}&=(0.2471    0.1207   -0.9614).\nonumber
	\end{align}
The objective functions of those point takes the value $F(\eta_1,\eta_2,\eta_3,\eta_4,\eta_5)=0.3209$. As a result we can calculate $A=G^{-\frac{1}{2}}$. So we have the functions $Z_{t}$ as in \eqref{Goal_basis_with_zonal_Harmonic} where
		$$A=\begin{bmatrix}
		0.4830  &  0.0473  &  0.0473  &  0.0786  &  0.0786 \\
		0.0473  &  0.4830  &  0.0786  &  0.0473  &  0.0786 \\
		0.0473  &  0.0786  &  0.4830  &  0.0786  &  0.0473 \\
		0.0786  &  0.0473  &  0.0786  &  0.4830  &  0.0473 \\
		0.0786  &  0.0786  &  0.0473  &  0.0473  &  0.4830
	\end{bmatrix}$$
Note that $A$ is diagonally dominant, so that $Z_{1}(x), \dots, Z_{5}(x)$. Codes for the computations of this example can be found at
\newline
 \href{https://github.com/HamedBaghal/Spherical\_Harmonics.git}{github.com/HamedBaghal/Spherical\_Harmonics.git}.
\end{ex}


Now we want to construct a three dimensional spherical monogenic basis in similar way. 

\begin{defn}
	Fix $y\in S^{m-1}$ and $c\in \mathbb{C}$. The zonal spherical monogenic $Z_{y}$ is defined by
	\begin{equation}
		Z_{y}(x):=cK_{k}^{m}(x,y),
	\end{equation}
	where $K_{k}^{m}(x,y)$ is the reproducing kernel given in \eqref{Repruducing_Kernel_of_Monogenic_Functions}.
\end{defn}
Given such a collection, define the functions $Z_{t}(x)$ by follows:
\begin{equation}\label{Goal basis with zonal}
	Z_{t}(x):=\sum_{j=1}^{k+1}K_{k}^{m}(x,\eta_{j})a_{jt},
\end{equation}
for $t=1,\dots ,k+1$. Here $a_{jt}\in (\mathbb{R}_{3}^{0}\oplus \mathbb{R}_{3}^{2})$ will be chosen so that the collection $\{Z_{t}\}_{t=1}^{k+1}$ forms an orthonormal basis for $\mathcal{M}_{\ell}^{+}(k)$. Following similar calculations as in \eqref{harmonic_version_matrix}, our goal is to obtain a matrix $A$ for which $A^{\ast}G A=I_{(k+1)}$, where $G$ is a $(k+1)\times (k+1)$ matrix with entries given by:
\begin{equation}\label{matrix_G_for_finding_coefficients}
	G_{ij}=K_{k}^{m}(\eta_{i},\eta_{j})=(k+1)\, C_{k}^{\frac{1}{2}}(\langle \eta_{i}, \eta_{j} \rangle)+(\eta_{i}\wedge \eta_{j})\, C_{k-1}^{\frac{3}{2}}(\langle \eta_{i}, \eta_{j} \rangle).
\end{equation}
Note that $G$ is self-adjoint since $\langle\eta_{i},\eta_{j}\rangle=\overline{\langle\eta_{j},\eta_{i}\rangle}$ and $ \overline{x\wedge y} = -x\wedge y = y\wedge x $. Additionally, the matrix $G$ is positive semidefinite. To see this, let $a\in M_{k+1,1}(\mathbb{R}_{3}^{+})$. Then the $\{Y_{k}^{\ell}\}_{\ell=1}^{k+1}$ any orthonormal basis for $\mathcal{M}_{\ell}^{+}(k)$, we have
	\begin{align}
		\langle Ga , a \rangle &=\sum_{i=1}^{k+1} \overline{\sum_{j=1}^{k+1} G_{ij}\, a_{j}}\, a_{i}\nonumber\\
		&=\sum_{i=1}^{k+1} \sum_{j=1}^{k+1} (\overline{a_{j}}\,\overline{G_{ij}})\, a_{i}\nonumber\\
		&=\sum_{i,j=1}^{k+1}\overline{a_{j}} (\sum_{l=1}^{k+1}\overline{Y_{k}^{l}(\eta_{j})}Y_{k}^{l}(\eta_{i})) a_{j}\nonumber\\
		&=\sum_{l=1}^{k+1}\overline{\sum_{j=1}^{k+1}Y_{k}^{l}(\eta_{j})\, a_{j}} \sum_{i=1}^{k+1} Y_{k}^{l}(\eta_{i})\, a_{i}=\sum_{l=1}^{k+1}\vert \sum_{l=1}^{k+1} Y_{k}^{\ell}(\eta_{i})a_{i} \vert^{2} \geq 0 \label{Positive_Definiteness_First_Relation}
	\end{align}
since $\sum_{i}Y_{k}^{\ell}(\eta_{i}) a_{i} \in \mathbb{R}_{3}$. Hence, $G$ is positive semidefinite.

By direct computation, we have the followings: 
\begin{lem}\label{Lemma_properties_star_Isomorphism}
	The map $\tau: \mathbb{R}_{3}^{+}\to \mathbb{R}_{2}$ given by 
	$$\tau(x_{0}+x_{12}e_{12}+x_{13}e_{13}+x_{23}e_{23})=x_{0}+x_{12}e_{12}+x_{13}e_{2}-x_{23}e_{1} $$
	is a $\ast$-isomorphism, i.e., 
	\begin{itemize}
		\item (i) $\tau (xy)=\tau(x)\tau(y)$ for all $x,y\in \mathbb{R}_{3}^{+}$,
		\item(ii) $\tau(\lambda x +\mu y)=\lambda \tau(x)+\mu\tau(y)$ for all $x,y \in \mathbb{R}_{3}^{+}$ and $\lambda, \mu \in \mathbb{R}$,
		\item(iii) $\tau(\overline{x})$=$\overline{\tau(x)}$ for all $x\in \mathbb{R}_{3}^{+}$.
	\end{itemize}
\end{lem}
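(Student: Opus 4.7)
The plan is to verify the three properties directly on the standard basis $\{1, e_{12}, e_{13}, e_{23}\}$ of $\mathbb{R}_3^+$ and then extend. Property (ii) is essentially free from the definition: the map $\tau$ is specified by its values on a basis of $\mathbb{R}_3^+$ and extended $\mathbb{R}$-linearly, so linearity holds by construction.

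For property (i), I would check multiplicativity on the pairs $e_A e_B$ with $A,B \in \{12,13,23\}$ (products with the identity are trivial). In $\mathbb{R}_3^+$ the bivectors satisfy $e_{12}^2 = e_{13}^2 = e_{23}^2 = -1$ together with $e_{12}e_{13} = e_{23}$, $e_{13}e_{23} = e_{12}$, and $e_{23}e_{12} = e_{13}$, which is precisely the multiplication table making $\mathbb{R}_3^+$ isomorphic to the quaternions. On the image side, $\tau(e_{12}) = e_{12}$, $\tau(e_{13}) = e_2$, $\tau(e_{23}) = -e_1$. One computes for example $\tau(e_{12})\tau(e_{13}) = e_{12}\,e_2 = e_1e_2e_2 = -e_1 = \tau(e_{23})$, and the other eight products are handled by the same kind of one-line manipulation, using $e_j^2 = -1$ and $e_ie_j = -e_je_i$ in $\mathbb{R}_2$. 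Once multiplicativity is checked on the generators, $\mathbb{R}$-bilinearity (from (ii)) extends it to all of $\mathbb{R}_3^+$.

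For property (iii), I would again reduce to basis elements. In $\mathbb{R}_3^+$ every bivector satisfies $\overline{e_{ij}} = -e_{ij}$, so $\tau(\overline{e_{ij}}) = -\tau(e_{ij})$. On the $\mathbb{R}_2$ side, $\overline{e_{12}} = -e_{12}$ and $\overline{e_j} = -e_j$ for $j=1,2$, so in each of the three cases $\overline{\tau(e_{ij})}$ also equals $-\tau(e_{ij})$. Combined with $\tau(1) = 1 = \overline{1}$, $\mathbb{R}$-linearity extends this to all of $\mathbb{R}_3^+$.

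The main (mild) obstacle is simply sign-bookkeeping: the asymmetry $\tau(e_{23}) = -e_1$ in the definition means one has to be careful when computing products involving $e_{23}$, and the same caution is needed for the conjugation check. No genuinely nontrivial step is required; the lemma is really the familiar identification $\mathbb{R}_3^+ \cong \mathbb{H} = \mathbb{R}_2$, packaged in a form that respects Clifford conjugation so that Definition \ref{Definition_Complex_Adjoint} and Lemma \ref{selfadjoint by complex adjoint transformation} can later be applied to the Gram matrix $G$ in \eqref{matrix_G_for_finding_coefficients}.
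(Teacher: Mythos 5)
Your proof is correct and takes the same route the paper intends: the paper simply asserts the lemma ``by direct computation,'' and your verification on the basis $\{1,e_{12},e_{13},e_{23}\}$ followed by $\mathbb{R}$-(bi)linear extension is exactly that computation. The sample check $\tau(e_{12})\tau(e_{13})=e_1e_2e_2=-e_1=\tau(e_{23})$ and the conjugation bookkeeping $\tau(\overline{e_{23}})=\tau(-e_{23})=e_1=\overline{-e_1}=\overline{\tau(e_{23})}$ are both accurate, and the remaining cases go the same way.
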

Let $B$ be a $k\times \ell$ matrix with entries in $\mathbb{R}_{3}^{+}$. We define $\tau(B)$ to be the $k+1$ matrix with quaternionic entries given by
$$[\tau(B)]_{ij}=\tau(B_{ij}).$$
\begin{prop}
	Let $\{\eta_{i}\}_{i=1}^{k+1}\subset S^{2}$ be chosen so that the $(k+1)\times (k+1)$ matrix G as above is positive definite. Then $\tau(G)$ is positive definite.
\end{prop}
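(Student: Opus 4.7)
The plan is to transport the positive-definiteness statement from $G$ to $\tau (G)$ by using $\tau$ as a $*$-algebra isomorphism, extended entrywise to matrices. First, I would observe that $\tau$ gives rise to a bijection $M_{(k+1)\times (k+1)}({\mathbb R}_3^+)\to M_{(k+1)\times (k+1)}({\mathbb H})$ (still denoted $\tau$) that is an algebra homomorphism (by (i) and (ii) of Lemma \ref{Lemma_properties_star_Isomorphism}) and intertwines the conjugate-transpose (by (iii)). In particular, since $G^{*}=G$, applying $\tau$ entrywise and using (iii) shows $\tau (G)^{*}=\tau (G)$, so $\tau (G)$ is self-adjoint over ${\mathbb H}$.

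For the positive-definiteness, let $v\in{\mathbb H}^{k+1}$. Since $\tau :{\mathbb R}_3^+\to{\mathbb H}$ is a bijection, there exists a unique $w\in ({\mathbb R}_3^+)^{k+1}$ with $\tau (w)=v$ (applied coordinatewise), and $w=0$ if and only if $v=0$. Using the three properties of $\tau$ and expanding the quaternionic sesquilinear form gives
$$v^{*}\tau (G)v=\sum_{i,j=1}^{k+1}\overline{\tau (w_{i})}\,\tau (G_{ij})\,\tau (w_{j})=\sum_{i,j=1}^{k+1}\tau (\overline{w_{i}}\,G_{ij}\,w_{j})=\tau \Big(\sum_{i,j=1}^{k+1}\overline{w_{i}}\,G_{ij}\,w_{j}\Big)=\tau (w^{*}Gw).$$
The computation carried out in \eqref{Positive_Definiteness_First_Relation} shows that $w^{*}Gw=\sum_{\ell}\big\vert\sum_{i}Y_{k}^{\ell}(\eta_{i})w_{i}\big\vert^{2}$ is a (real) non-negative scalar; by hypothesis $G$ is positive definite, so $w^{*}Gw>0$ whenever $w\neq 0$. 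Because $\tau$ fixes real scalars (the scalar part $x_{0}$ is mapped to $x_{0}$), we conclude
$$v^{*}\tau (G)v=\tau (w^{*}Gw)=w^{*}Gw>0$$
for every non-zero $v\in{\mathbb H}^{k+1}$. Combined with the self-adjointness of $\tau (G)$ this gives positive-definiteness in the quaternionic sense.

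The only point that requires a bit of care—and where I see the main potential obstacle—is making sure that the notion of ``positive definite'' on each side is the right one: on the ${\mathbb R}_3^+$ side one needs $w^{*}Gw$ to lie in the real scalar part so that comparison with $0$ makes sense, and on the ${\mathbb H}$ side one needs $v^{*}\tau (G)v$ to be real. The first point is handled by \eqref{Positive_Definiteness_First_Relation}; the second is automatic because $\tau (G)$ is self-adjoint and Hermitian forms with respect to self-adjoint quaternionic matrices take real values. Once these two reality statements are in hand, the $*$-isomorphism $\tau$ mechanically transfers positive-definiteness from $G$ to $\tau (G)$.
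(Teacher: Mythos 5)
Your proof is correct and takes essentially the same route as the paper: set $w=\tau^{-1}v$ componentwise, use the three properties of $\tau$ from Lemma~\ref{Lemma_properties_star_Isomorphism} to pull $\tau$ outside the quadratic form, and conclude from positive-definiteness of $G$. Your closing remarks on why the relevant quadratic forms are real-valued (self-adjointness on the $\mathbb H$ side, and the reduction in \eqref{Positive_Definiteness_First_Relation} together with $\tau$ fixing real scalars on the $\mathbb R_3^+$ side) make explicit a point the paper leaves implicit, but the argument is substantively the same.
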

\begin{proof}
	Let $(\mathbb{R}_{2})^{k+1}$ be a quaternionic  column vector and $\tilde{y}=\tau^{-1}y\in (\mathbb{R}_{3}^{+})^{k+1}$ be the column vector with entries in $\mathbb{R}_{3}^{+}$ given by $(\tau^{-1}y)_{j}=\tau^{-1}y_{j}$. Then 
	
\begin{align}
		\sum_{i=1}^{k+1} \overline{((\tau G)\, y)_{i}}\, y_{i}&=\sum_{i=1}^{k+1} \sum_{j=1}^{k+1} \overline{y_{j}}\overline{(\tau G)_{ij}}\, y_{i}\nonumber\\
		&=\sum_{i=1}^{k+1} \sum_{j=1}^{k+1} \overline{\tau (\tilde{y})}_{j}\, \overline{(\tau (G))}_{ij}\, (\tau\tilde{y})_{i}\nonumber\\
		&=\sum_{i=1}^{k+1} \sum_{j=1}^{k+1} \overline{\tau (\tilde{y})}_{j}\, \overline{\tau G}_{ij}\, \tau (\tilde{y})_{i}\nonumber\\
		&=\tau\bigg(\sum_{i=1}^{k+1} \sum_{j=1}^{k+1} \overline{ \tilde{y}_{j}}\, \overline{G}_{ij}\, \tilde{y}_{i}\bigg)\nonumber\\
		&=\tau\bigg(\sum_{i=1}^{k+1} \sum_{j=1}^{k+1} \overline{ \tilde{y}_{j}}\, G_{ij}\, \tilde{y}_{i}\bigg)> 0, \label{Positive_Definiteness_Relation_Tau}
\end{align}
	since $G$ is positive definite.
\end{proof}
\begin{thm}
	Let $\{ \eta_{1}, \dots , \eta_{k+1} \}\subset S^{2}$ be chosen so that the $(k+1)\times (k+1)$ matrix $G$ defined in \eqref{matrix_G_for_finding_coefficients} is positive definite. Let $A$ be the $(k+1)\times (k+1)$ matrix with entries from $\mathbb{R}_{3}^{+}$ given by
	\begin{equation}\label{Coefficient Matrix of Zonal}
			A=\tau^{-1}\chi^{-1}\big(\chi\tau G\big)^{-\frac{1}{2}},
		\end{equation}
	where $\chi$ is the complex adjoint matrix defined in Definition \ref{Definition_Complex_Adjoint}. Then 
	$$A^{\ast}GA=I_{k+1}.$$
\end{thm}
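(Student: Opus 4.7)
The plan is to reduce the identity $A^{\ast}GA=I_{k+1}$ to an ordinary complex matrix identity by passing successively through the $\ast$-isomorphism $\tau:\mathbb{R}_3^{+}\to\mathbb{H}$ of Lemma \ref{Lemma_properties_star_Isomorphism} and the complex adjoint $\chi:M_{n\times n}(\mathbb{H})\to M_{2n\times 2n}(\mathbb{C})$ of Definition \ref{Definition_Complex_Adjoint}. Both maps preserve products and conjugate transposes (the latter by Theorem \ref{Lee_s_theorem_for_quaternionic_matrix}), so I would first record the compatibility $\chi\tau(XY)=(\chi\tau X)(\chi\tau Y)$ and $\chi\tau(X^{\ast})=(\chi\tau X)^{\ast}$ for matrices $X,Y$ with entries in $\mathbb{R}_3^{+}$, together with $\chi\tau(I_{k+1})=I_{2(k+1)}$.

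Next I would justify that the right-hand side of \eqref{Coefficient Matrix of Zonal} is well-defined. From \eqref{Positive_Definiteness_Relation_Tau} the matrix $\tau G$ is self-adjoint and positive definite over $\mathbb{H}$, so by Lemma \ref{selfadjoint by complex adjoint transformation} the complex matrix $\chi\tau G$ is self-adjoint positive definite in $M_{2(k+1)\times 2(k+1)}(\mathbb{C})$. Consequently it admits a unique self-adjoint positive definite square root, and $(\chi\tau G)^{-1/2}$ is again self-adjoint positive definite. Applying Lemma \ref{selfadjoint by complex adjoint transformation} in the reverse direction shows $(\chi\tau G)^{-1/2}$ lies in the image of $\chi$ applied to a self-adjoint positive definite quaternionic matrix, and that quaternionic preimage lies in the image of $\tau$ (since self-adjointness under $\tau$ corresponds to having $e_{12}$- and $e_{13}$-components commuting appropriately, which is automatic on the even subalgebra). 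Hence $A\in M_{(k+1)\times (k+1)}(\mathbb{R}_3^{+})$ is well-defined and $\chi\tau A=(\chi\tau G)^{-1/2}$.

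With these ingredients the main computation is short: applying $\chi\tau$ to $A^{\ast}GA$ and using multiplicativity together with compatibility with $\ast$ gives
\begin{equation*}
\chi\tau(A^{\ast}GA)=(\chi\tau A)^{\ast}\,(\chi\tau G)\,(\chi\tau A)=(\chi\tau G)^{-1/2}\,(\chi\tau G)\,(\chi\tau G)^{-1/2}=I_{2(k+1)}.
\end{equation*}
Since $\chi\tau(I_{k+1})=I_{2(k+1)}$ and the composition $\chi\circ\tau$ is injective (both $\tau$ and $\chi$ are), I can conclude $A^{\ast}GA=I_{k+1}$.

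The only genuinely delicate step is the well-definedness in the middle paragraph, namely verifying that $\chi^{-1}$ can be applied to $(\chi\tau G)^{-1/2}$ and that $\tau^{-1}$ can then be applied to the result. Everything else is formal manipulation via the functorial properties already established in Theorem \ref{Lee_s_theorem_for_quaternionic_matrix} and Lemma \ref{Lemma_properties_star_Isomorphism}. I would therefore spend most of the write-up carefully explaining why self-adjoint positive definite elements of $M_{2(k+1)\times 2(k+1)}(\mathbb{C})$ pull back through $\chi$ to self-adjoint positive definite elements of $M_{(k+1)\times (k+1)}(\mathbb{H})$, and why these in turn pull back through $\tau$ to matrices over $\mathbb{R}_3^{+}$, and treat the final identity as a one-line consequence.
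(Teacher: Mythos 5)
Your proposal takes the same route as the paper's proof: push $G$ through $\tau$ and $\chi$, form the inverse square root in $M_{2(k+1)\times 2(k+1)}(\mathbb{C})$, pull back, and conclude $A^{\ast}GA=I_{k+1}$ from the multiplicativity and $\ast$-compatibility of $\chi\circ\tau$; the closing computation is exactly the paper's.

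Where you go beyond the paper is in trying to justify the well-definedness of $A$, and the justification you offer for the step you yourself flag as the delicate one has a gap. Lemma \ref{selfadjoint by complex adjoint transformation} is a biconditional for matrices \emph{already known} to lie in the range of $\chi$; it does not assert that a self-adjoint positive definite element of $M_{2(k+1)\times 2(k+1)}(\mathbb{C})$ has a quaternionic preimage, and most do not, since $\chi(M_{(k+1)\times(k+1)}(\mathbb{H}))$ is a proper real subalgebra of $M_{2(k+1)\times 2(k+1)}(\mathbb{C})$ (of half the real dimension). So ``applying the lemma in reverse'' does not put $(\chi\tau G)^{-1/2}$ in the range of $\chi$. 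The argument that does work uses the characterization of that range: $B\in\chi(M_{(k+1)\times(k+1)}(\mathbb{H}))$ if and only if $J\overline{B}J^{-1}=B$, where $J$ is the $2(k+1)\times 2(k+1)$ block matrix with $I_{k+1}$ in the top-right block, $-I_{k+1}$ in the bottom-left block, and zeros elsewhere. The map $B\mapsto J\overline{B}J^{-1}$ is a unital $\mathbb{R}$-algebra automorphism (conjugation by the unitary $J$ composed with entrywise complex conjugation), it preserves self-adjointness and positive definiteness, and it commutes with inversion and with the unique positive square root (it commutes with real-coefficient polynomials and hence with the continuous functional calculus). Therefore its fixed-point set $\chi(M_{(k+1)\times(k+1)}(\mathbb{H}))$ is stable under $B\mapsto B^{-1/2}$, which is what shows $\chi^{-1}$ may be applied to $(\chi\tau G)^{-1/2}$. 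Your parenthetical about the quaternionic preimage lying in the image of $\tau$ is also off, though harmlessly: $\tau$ is a bijection of $\mathbb{R}_3^{+}$ onto $\mathbb{H}=\mathbb{R}_2$, so $\tau^{-1}$ applies to every quaternionic matrix with nothing further to check. The paper silently elides this well-definedness question, so your instinct to address it is sound; only the tool you reached for does not do the job.
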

\begin{proof}
	Since $G$ is positive definite on $\mathbb{R}_3^{+}$ by Lemma \ref{Lemma_properties_star_Isomorphism} we have  that $\tau(G)$ is positive definite on $\mathbb{R}_{2}^{k+1}$ and by Lemma \ref{selfadjoint by complex adjoint transformation} $\chi \tau (G)$ is positive definite on $\mathbb{R}^{2k+2}$. Hence $(\chi \tau (G))^{-\frac{1}{2}}$ exists and
\begin{align*}
	A^{\ast} G A&=[\tau^{-1}\chi^{-1}\big(\chi\tau (G)\big)^{-\frac{1}{2}}]^{\ast}\,G\, \tau^{-1}\chi^{-1}\big(\chi\tau (G)\big)^{-\frac{1}{2}}	\\
	&=\tau^{-1} \chi^{-1} \big( (\chi \tau (G))^{-\frac{1}{2}}  \chi \tau (G) \, \chi \tau (G)^{-\frac{1}{2}}  \big)\\
	&=\tau^{-1} \chi^{-1}(I_{2k+2})=\tau^{-1} I_{k+1}=I_{k+1}.
\end{align*}
\end{proof}
In the following we prove that the matrix $G$ defined in \eqref{matrix_G_for_finding_coefficients} is almost for every choice of points is positive definite. We employ spherical coordinates on $S^{2}$, i.e., we want each $\omega\in S^{2}$ as $\omega=(\cos\theta \sin\phi, \sin\theta \sin\phi, \cos\phi)$ with $0\leq \theta < 2\pi$, $0\leq \phi <\pi$. If $F:S^2 \to \mathbb{C}$ we write $F(\theta,\phi)=F(\omega)$.

\begin{lem}
	For $0\leq n\leq k$, let $F_{k}^{n}$ be as in \eqref{first_nminusone_monogenic}. Then
	\begin{align*}
		& \sum_{j=0}^{k}\overline{F_k^n(\frac{j}{k+1},\phi )}F_k^\ell (\frac{j}{k+1},\phi )=\frac{(\sin\phi )^{2n-2}}{2^{2n+2}\pi}(k+1)\big[(k+1-n)^2P_{k+1-n}^{(n,n)}(\cos\phi )^2\\
		& +(k+1)^2P_{k-n}^{(n,n)}(\cos\phi )^2 -2(k+1) (k+1-n)P_{k+1-n}^{(n,n)}(\cos\phi )P_{k-n}^{(n,n)}(\cos\phi )\cos\phi \big]\delta_{n\ell}.
	\end{align*}
\end{lem}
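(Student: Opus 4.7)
The plan is to imitate the orthogonality calculation used in the proof of Theorem \ref{Orthonormality_Fk}, replacing the integral $\int_0^{2\pi}(\cdot)\,d\theta$ by the discrete sum $\sum_{j=0}^k(\cdot)\big\vert_{\theta=\theta_j}$ over the equispaced nodes $\theta_j=2\pi j/(k+1)$, while leaving the $\phi$ variable free (which is why the right-hand side is still a function of $\phi$). The sampled sums will be evaluated using the discrete Fourier identity
$$\sum_{j=0}^k e^{e_{12}m\theta_j}=(k+1)\,\delta_{m\equiv 0\!\!\!\pmod{k+1}},$$
which plays the role of $\int_0^{2\pi}e^{e_{12}m\theta}\,d\theta=2\pi\delta_{m,0}$ in Theorem \ref{Orthonormality_Fk}.

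First I would write $F_k^n(\theta,\phi)=\omega(A_n+B_n)$, where (adjusting \eqref{first_nminusone_monogenic} via $k-1\mapsto k$) $A_n$ is the term containing the factor $(k+1-n)P_{k+1-n}^{(n,n)}(\cos\phi)$ and $B_n$ the term containing $(k+1)P_{k-n}^{(n,n)}(\cos\phi)$. Since $\overline{\omega}\omega=1$, the product $\overline{F_k^n}F_k^\ell$ splits into four pieces $\overline{A_n}A_\ell$, $\overline{A_n}B_\ell$, $\overline{B_n}A_\ell$, $\overline{B_n}B_\ell$, in direct analogy with $I_1,I_2,I_3,I_4$ from the proof of Theorem \ref{Orthonormality_Fk}.

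Next, each piece should be simplified. Using Lemma \ref{conjugation_ephi} for the conjugate of $e^{-\phi e_{13}e^{-e_{12}\theta}}$, together with $\overline{e_{13}}e_{13}=1$ and the obvious cancellations of the $e^{\pm e_{12}\theta}$ factors, both $\overline{A_n}A_\ell$ and $\overline{B_n}B_\ell$ reduce at once to pure products of Jacobi polynomials in $\cos\phi$ multiplied by $e^{e_{12}(n-\ell)\theta}$. For $\overline{A_n}B_\ell$ and $\overline{B_n}A_\ell$ one applies Lemma \ref{Commutation_Lemma} (in the first case) and its $\phi\mapsto-\phi$ variant (obtained by conjugating Lemma \ref{Commutation_Lemma} and noting that only the $\sin\phi$ term changes sign) to commute the $e^{\pm\phi e_{13}e^{-e_{12}\theta}}$ past the adjacent $e^{-\alpha e_{12}\theta}$. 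Each cross term thereby splits into a $\cos\phi$ piece proportional to $e^{e_{12}(n-\ell)\theta}$ and a $\sin\phi\,e_{13}$ piece proportional to $e^{e_{12}(n+\ell+1)\theta}$, with opposite signs between $\overline{A_n}B_\ell$ and $\overline{B_n}A_\ell$ on the $\sin\phi\,e_{13}$ pieces.

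Applying the discrete Fourier identity, every exponent of the form $n-\ell$ satisfies $|n-\ell|\leq k<k+1$, so the orthogonality forces $n=\ell$ on each surviving term. When $n=\ell$, the coefficients of $\sin\phi\,e_{13}$ from the two cross terms are $\mp(k+1)(k+1-n)P_{k+1-n}^{(n,n)}(\cos\phi)P_{k-n}^{(n,n)}(\cos\phi)$, and therefore cancel exactly; only the $\cos\phi$ contributions survive there. Collecting the four surviving diagonal contributions and inserting the overall prefactor $(\sin\phi)^{2n-2}/(2^{2n+2}\pi)$ gives the right-hand side: $\overline{A_n}A_n$ contributes $(k+1-n)^2\bigl(P_{k+1-n}^{(n,n)}\bigr)^2$, $\overline{B_n}B_n$ contributes $(k+1)^2\bigl(P_{k-n}^{(n,n)}\bigr)^2$, and the combined $\cos\phi$ pieces of $\overline{A_n}B_n+\overline{B_n}A_n$ contribute $-2(k+1)(k+1-n)\cos\phi\,P_{k+1-n}^{(n,n)}P_{k-n}^{(n,n)}$. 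The chief technical obstacle is the bookkeeping of signs through the conjugation of Clifford exponentials and through the two variants of Lemma \ref{Commutation_Lemma}, and in particular verifying that the non-scalar $\sin\phi\,e_{13}$ components of the cross terms indeed cancel on the diagonal so that the final answer is the scalar expression stated.
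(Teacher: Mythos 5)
Your approach coincides with the paper's: using $\overline\omega\,\omega=1$ you split $\overline{F_k^n}F_k^\ell$ into the four products that the paper calls $S_1,\dots,S_4$, apply Lemma~\ref{conjugation_ephi} and Lemma~\ref{Commutation_Lemma}, and invoke the discrete Fourier identity $\sum_{j=0}^k e^{2\pi e_{12}mj/(k+1)}=(k+1)$ whenever $m$ is a multiple of $k+1$ (and $0$ otherwise), exactly as the paper does; the four diagonal contributions you collect match the paper's $S_1,S_4,S_2,S_3$, and your normalisation and final assembly agree.

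There is one point worth flagging, because both you and the paper pass over it. After Lemma~\ref{Commutation_Lemma} the $\sin\phi\,e_{13}$ piece of each cross term carries the phase $e^{e_{12}(n+\ell+1)\theta_j}$, and the discrete orthogonality does \emph{not} annihilate it whenever $n+\ell+1$ is a multiple of $k+1$, i.e.\ whenever $n+\ell=k$; this includes off-diagonal pairs $(n,\ell)$ with $n\neq\ell$. Your statement that ``orthogonality forces $n=\ell$'' applies only to the $\cos\phi$ piece (whose exponent is $n-\ell$), and you verify the sign cancellation of the $\sin\phi\,e_{13}$ pieces only for $n=\ell$. For $n\neq\ell$ with $n+\ell=k$ the two surviving $\sin\phi\,e_{13}$ coefficients, proportional to $(k+1-n)P_{k+1-n}^{(n,n)}P_{k-\ell}^{(\ell,\ell)}$ and $(k+1-\ell)P_{k-n}^{(n,n)}P_{k+1-\ell}^{(\ell,\ell)}$, do not cancel in general (try $k=3$, $n=1$, $\ell=2$, $\phi=\pi/2$). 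The paper's proof has the identical omission: in its computation of $S_2$ the contribution $\sin\phi\, e^{2\pi e_{12}(\ell+1)j/(k+1)}e_{13}$ is silently discarded and then ``$S_3=S_2$'' is asserted. So your proposal faithfully reproduces the paper's argument, including this shared blind spot.
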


\begin{proof}
	Note that 
	\begin{align}
		&\sum_{n=0}^{k}\overline{F_k^n(2\pi\frac{j}{k+1},\phi )}F_k^\ell (2\pi\frac{j}{k+1},\phi )=\frac{(\sin\phi )^{n+\ell -2}}{2^{n+\ell -2}\pi}\notag\\
		&\quad\times\sum_{j=0}^{k}[(k+1-n)e^{e_{12}n\frac{j}{k+1}}e^{\phi e_{13}e^{-e_{12}\frac{j}{k+1}}}e^{e_{12}\frac{j}{k+1}}P_{k+1-n}^{(n,n)}(\cos\phi )\notag\\
		&\quad-(k+1)e^{e_{12}(n+1)\frac{j}{k+1}}P_{k+1-n}^{(n,n)}(\cos\phi )]\overline{e_{13}}\,\overline\omega\notag\\
		&\quad\times\omega e_{13}[(k+1-\ell )e^{-e_{12}\frac{j}{k+1}}e^{-\phi e_{13}e^{-e_{12}\frac{j}{k+1}}}e^{-e_{12}\ell \frac{j}{k+1}}P_{k+1-\ell}^{(\ell ,\ell )}(\cos\phi )\notag\\
		&\quad-(k+1)e^{-e_{12}(\ell +1)\frac{j}{k+1}}P_{k-\ell}^{(\ell ,\ell )}(\cos\phi )]\notag\\
		&\quad=S_1+S_2+S_3+S_4\label{S list}
	\end{align}
	with $S_1, S_2, S_3 , S_4$ to be computed separately. We have 
	\begin{align}
		S_1&=\frac{(\sin\phi )^{N+\ell -2}}{2^{n+\ell +2}\pi}P_{k+1-n}^{(n,n)}(\cos\phi )P_{k_1-\ell}^{(\ell ,\ell )}(\cos\phi )\notag\\
		&\quad\times\sum_{j=0}^{k}[(k+1-n)e^{2\pi e_{12}n\frac{j}{k+1}}e^{\phi e_{13}e^{-2\pi e_{12}\frac{j}{k+1}}}e^{2\pi e_{12}\frac{j}{k+1}}]\notag\\
		&\quad\times[(k+1-\ell )e^{-2\pi e_{12}\frac{j}{k+1}}e^{-\phi e_{13}e^{-2\pi e_{12}\frac{j}{k+1}}}e^{-2\pi e_{12}\ell \frac{j}{k+1}}]\notag\\
		&=\frac{(\sin\phi )^{N+\ell -2}}{2^{n+\ell +2}\pi}P_{k+1-n}^{(n,n)}(\cos\phi )P_{k_1-\ell}^{(\ell ,\ell )}(\cos\phi )(k+1-n)(k+1-\ell )\notag\\
		&\quad \times\sum_{j=0}^{k}e^{2\pi e_{12}(n-\ell )\frac{j}{k+1}}\notag\\
		&=\frac{(\sin\phi )^{2n-2}}{2^{2n+2}\pi}(k+1)(k+1-n)^2P_{k+1-n}^{(n,n)}(\cos\phi )^2\delta_{n\ell }.\label{S1}
	\end{align}
	A similar computation gives
	\begin{equation}
		S_4=\frac{(\sin\phi )^{2n-2}}{2^{2n+2}\pi}(k+1)^3P_{k-n}^{(n,n)}(\cos\phi )^2\delta_{n\ell}.\label{S2}
	\end{equation}
	For $S_2$ we have
	\begin{align}
		S_2&=-\frac{(\sin\phi )^{n+\ell -2}}{2^{n+\ell +2}\pi}(k+1-n)(k+1)P_{k+1-n}^{(n,n)}(\cos\phi )P_{k-\ell}^{(\ell,\ell )}(\cos\phi )\notag\\
		&\quad\times\sum_{j=0}^{k}e^{2\pi e_{12}n\frac{j}{k+1}}e^{\phi e_{13}e^{-2\pi e_{12}\frac{j}{k+1}}}e^{-2\pi e_{12}\ell \frac{j}{k+1}}\notag\\
		&=-\frac{(\sin\phi )^{n+\ell -2}}{2^{n+\ell +2}\pi}(k+1-n)(k+1)P_{k+1-n}^{(n,n)}(\cos\phi )P_{k-\ell}^{(\ell,\ell )}(\cos\phi )\notag\\
		&\quad\times \sum_{j=0}^{k}e^{2\pi e_{12}n\frac{j}{k+1}}[\cos\phi e^{-2\pi e_{12}\ell \frac{j}{k+1}}+\sin\phi e^{2\pi e_{12}(\ell +1)\frac{j}{k+1}}e_{13}]\notag\\
		&=-\frac{(\sin\phi )^{2n-2}}{2^{2n+2}\pi}(k+1)^2(k+1-n)P_{k+1-n}^{(n,n)}(\cos\phi )P_{k-n}^{(n,n)}(\cos\phi )\cos\phi\delta_{n\ell} \label{S3}
	\end{align}
	and in a similar fashion we have $S_3=S_2$. Combining (\ref{S list})-(\ref{S3}) gives the result.
\end{proof}
\begin{cor}
	Let $\{F_k^n\}_{n=0}^{k}$ be as above. Then 
	\begin{align*}
	\sum_{n=0}^{k}\overline{F_k^n(\frac{2\pi j}{k+1},\frac{\pi}{2} )}F_k^\ell (\frac{2\pi j}{k+1},\frac{\pi}{2} )&=\frac{k+1}{2^{2n+2}\pi}[(k+1-n)^2P_{k+1-n}^{(n,n)}(0)^2\\
	&\quad+(k+1)^2P_{k-n}^{(n,n)}(0 )^2]\delta_{n\ell}.		
	\end{align*}
\end{cor}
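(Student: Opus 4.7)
The plan is to derive this corollary as a direct specialization of the preceding lemma: one simply substitutes $\phi = \pi/2$ into the formula
\begin{align*}
\sum_{j=0}^{k}\overline{F_k^n(\tfrac{2\pi j}{k+1},\phi )}F_k^\ell (\tfrac{2\pi j}{k+1},\phi )
&=\frac{(\sin\phi )^{2n-2}}{2^{2n+2}\pi}(k+1)\big[(k+1-n)^2P_{k+1-n}^{(n,n)}(\cos\phi )^2\\
&\quad +(k+1)^2P_{k-n}^{(n,n)}(\cos\phi )^2 \\
&\quad -2(k+1)(k+1-n)P_{k+1-n}^{(n,n)}(\cos\phi )P_{k-n}^{(n,n)}(\cos\phi )\cos\phi \big]\delta_{n\ell}.
\end{align*}

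First I would record the relevant trigonometric values, namely $\sin(\pi /2)=1$ and $\cos(\pi /2)=0$. The factor $(\sin\phi )^{2n-2}$ collapses to $1$ at $\phi =\pi/2$, and the entire cross term, which carries a factor of $\cos\phi$, vanishes. Only the two squared terms survive, and since $P_{k+1-n}^{(n,n)}$ and $P_{k-n}^{(n,n)}$ are evaluated at $\cos(\pi /2)=0$, we arrive precisely at the expression on the right-hand side of the stated corollary.

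Since the lemma already carries the Kronecker factor $\delta_{n\ell}$ globally, no further manipulation is required; the specialization preserves the orthogonality structure automatically. Note also that the sum index on the left-hand side of the corollary as printed should be read as a sum over $j$ (not $n$), consistently with the lemma; under that reading, there is nothing to prove beyond the substitution itself. The only potential subtlety, and the step I would state explicitly for clarity, is the cancellation of the middle term; the rest is a one-line evaluation.
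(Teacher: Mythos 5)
Your proof is correct and is exactly the one the paper intends: the corollary is stated immediately after the lemma with no proof given, because it is precisely the specialization $\phi=\pi/2$ (so $\sin\phi=1$, $\cos\phi=0$, the cross term vanishes). You also rightly note the typo in the corollary's summation index, which should be $j$, not $n$.
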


The normalised spherical monogenics $\tilde F_k^n$ are defined by $\tilde F_k^n=\frac{F_k^n}{\sqrt{(k+1)(2k+3)}}$. We therefore have
	\begin{align*}
\sum_{n=0}^{k}\overline{\tilde F_k^n(\frac{2\pi j}{k+1},\frac{\pi}{2} )}\tilde F_k^\ell (\frac{2\pi j}{k+1},\frac{\pi}{2} )&=\frac{1}{2^{2n+2}(2k+3)\pi}[(k+1-n)^2P_{k+1-n}^{(n,n)}(0)^2\\
&\quad+(k+1)^2P_{k-n}^{(n,n)}(0 )^2]\delta_{n\ell}.
	\end{align*}

\begin{thm} For almost every choice of $\eta\in (S^2)^{k+1}$, $G=G(\eta)$ is invertible.
\end{thm}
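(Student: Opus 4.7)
The plan is to exhibit one concrete configuration $\eta^{\ast}\in(S^{2})^{k+1}$ at which $G(\eta^{\ast})$ is invertible, and then to leverage real-analyticity of a complex-determinantal function to conclude that the locus of singularity in $(S^{2})^{k+1}$ has Lebesgue measure zero.

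For the first step I would rewrite $G$ through the reproducing-kernel identity of Theorem \ref{reproducing_kernel_equation}. Using the orthonormal basis $\{\tilde F_{k}^{n}\}_{n=0}^{k}$ of $\mathcal{M}_{\ell}^{+}(k)$ constructed in Section 3, we obtain
$$G_{ij}=K_{k}^{3}(\eta_{i},\eta_{j})=\sum_{n=0}^{k}\tilde F_{k}^{n}(\eta_{i})\,\overline{\tilde F_{k}^{n}(\eta_{j})},$$
which factors as $G=VV^{\ast}$ where $V$ is the $(k+1)\times(k+1)$ Clifford-valued matrix with entries $V_{j,n}=\tilde F_{k}^{n}(\eta_{j})$. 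It therefore suffices to show that $V$ is invertible at some $\eta^{\ast}$, for which in turn it suffices to show that $V^{\ast}V$ is invertible there.

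For the test configuration I would take the equatorial equispaced points $\eta_{j}^{\ast}=(\cos\tfrac{2\pi j}{k+1},\sin\tfrac{2\pi j}{k+1},0)$, $j=0,\dots,k$. The preceding lemma and corollary (with the outer summation taken over $j$) compute that at $\eta^{\ast}$ the matrix $V^{\ast}V$ is diagonal, with $n$-th entry
$$d_{n}=\frac{(k+1-n)^{2}P_{k+1-n}^{(n,n)}(0)^{2}+(k+1)^{2}P_{k-n}^{(n,n)}(0)^{2}}{2^{2n+2}(2k+3)\pi}.$$
To rule out any $d_{n}=0$ I would invoke the parity identity $P_{m}^{(n,n)}(-x)=(-1)^{m}P_{m}^{(n,n)}(x)$: exactly one of $k-n$, $k+1-n$ is even, and that one evaluates at $0$ to a nonzero value through the Gegenbauer identification $P_{2p}^{(n,n)}\propto C_{2p}^{n+1/2}$ together with $C_{2p}^{\lambda}(0)=(-1)^{p}(\lambda)_{p}/p!\neq 0$ for $\lambda>0$. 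Hence every $d_{n}>0$, so $V^{\ast}V$ and therefore $G(\eta^{\ast})=VV^{\ast}$ are invertible.

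Finally, to pass from one $\eta^{\ast}$ to almost every $\eta$, I would combine Lemma \ref{Lemma_properties_star_Isomorphism} and Theorem \ref{Lee_s_theorem_for_quaternionic_matrix}: $G(\eta)$ is invertible if and only if the complex $(2k+2)\times(2k+2)$ matrix $\chi(\tau(G(\eta)))$ is invertible, equivalently $D(\eta):=\det_{\mathbb{C}}\chi(\tau(G(\eta)))\neq 0$. Since each entry $G_{ij}(\eta)$ is polynomial in the coordinates of the $\eta_{i}$, $D$ is a real-analytic (in fact polynomial) function on the connected real-analytic manifold $(S^{2})^{k+1}$. From $D(\eta^{\ast})\neq 0$ we deduce $D\not\equiv 0$, so its zero set is a proper real-analytic subvariety of $(S^{2})^{k+1}$ and therefore has Lebesgue measure zero. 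I expect the main obstacle to be the Jacobi-polynomial parity analysis needed to verify $d_{n}\neq 0$ for every $n$, especially the boundary cases ($n=0$ and $n=k$) which must be handled directly using $P_{0}^{(n,n)}\equiv 1$.
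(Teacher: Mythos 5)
Your proposal follows essentially the same route as the paper: (i) factor $G(\eta)=VV^{\ast}$ via the reproducing-kernel expansion in the orthonormal basis $\{\tilde F_k^n\}$ from Section~3, (ii) verify invertibility at the special ensemble of $k+1$ equispaced equatorial points using the diagonalization of $V^{\ast}V$ supplied by the preceding lemma and corollary, and (iii) conclude genericity from the fact that $\det_{\mathbb{C}}\chi(\tau(G(\eta)))$ is polynomial in $\eta$. The differences are presentational and, on two counts, to your credit. First, you argue in the forward direction (``$D(\eta^{\ast})\neq 0$, so $D\not\equiv 0$, so its zero set in $(S^{2})^{k+1}$ is a proper real-analytic subvariety of measure zero''), whereas the paper argues the contrapositive, homogenizing $G$ to a polynomial $\det\chi(g)(X)$ on $(\mathbb{R}^{3})^{k+1}$ and showing that a positive-measure singular locus on the sphere would inflate to an infinite-measure zero set of that polynomial, forcing it to vanish identically and contradicting the equatorial computation. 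Both are correct; yours avoids the detour through $(\mathbb{R}^{3})^{k+1}$ at the cost of invoking the real-analytic zero-set theorem rather than the more elementary ``polynomial vanishing on a positive-measure set is zero.'' Second, you actually prove the nonvanishing of the diagonal entries $d_n$ via the parity of $P_m^{(n,n)}$ and the Gegenbauer value $C_{2p}^{\lambda}(0)\neq 0$; the paper asserts $(k+1-\ell)^{2}P_{k+1-\ell}^{(\ell,\ell)}(0)^{2}+(k+1)^{2}P_{k-\ell}^{(\ell,\ell)}(0)^{2}\neq 0$ without justification, so this is a genuine completion of the argument.

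One point worth flagging, which both you and the paper treat somewhat informally: the step from ``$V^{\ast}V=\mathrm{diag}(d_0,\dots,d_k)$ with $d_n>0$ real'' to ``$G=VV^{\ast}$ invertible'' requires a short argument in the noncommutative setting, since $V$ has entries in the odd part of $\mathbb{R}_{3}$ and the full algebra $\mathbb{R}_{3}$ has zero divisors. The cleanest repair is to note that $[\langle V^{\ast}Vx,x\rangle]_{0}=\sum_n d_n|x_n|^{2}$, so $Vx=0$ forces $x=0$; thus $V$ is an injective $\mathbb{R}$-linear endomorphism of the finite-dimensional space $(\mathbb{R}_{3})^{k+1}$, hence bijective, hence $G=VV^{\ast}$ is bijective and therefore invertible as a Clifford-matrix. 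The paper's own handling of this point (the even/odd invariant-subspace discussion around $H(\eta)$ and $H(\eta)^{\ast}$) is also sketchy, and in fact misstates the parity of the $F_k^\ell$ (they are odd-valued, being $\omega$ times an even expression). So your omission here mirrors a weakness already present in the source rather than introducing a new gap.
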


\begin{proof} Suppose $G=G(\eta)$ is singular on a set of positive measure $E\subset (S^2)^{k+1}$ with $\eta=(\eta_1,\dots ,\eta_{k+1})\in (S^2)^{k+1}$ and $r_1,\dots ,r_{k+1}>0$, let $x_i=r_i^k\eta_i$ and $X=(x_1,\dots ,x_{k+1})\in({\mathbb R}^3)^{k+1}$. Let  $g=g(X)\in M_{k+1}({\mathbb R}_3^+)$ have $(i,j)$-th entry
	$$g(X)_{ij}=|x_i|^k|x_j|^k G(\eta)_{ij}.$$ 
	Then 
	$$\det(\chi (g)(X))=|x_1|^{2k}\cdots |x_{k+1}|^{2k}\det(\chi (G)(\eta)).$$
	However, $\det (\chi (g)(X))$ is a polynomial on $({\mathbb R}^3)^{k+1}$ of degree $2k(k+1)$ and is homogeneous of degree $2k$ in every variable $x_i$. To see this observe that 
	\begin{align*}
		g(X)_{ij}&=|x_i|^k|x_j|^k G(\eta)_{ij}\\
		&=|x_i|^k|x_j|^k[(k+1)C_k^{\frac{1}{2}}(\langle\eta_i,\eta_j\rangle )+(\eta_i\wedge\eta_j)C_{k-1}^{\frac{3}{2}}(\langle\eta_i,\eta_j\rangle )]\\
		&=|x_i|^k|x_j|^k[\sum_{\ell =0}^{\lfloor\frac{k}{2}\rfloor}a_{k-2\ell}\langle\eta_i,\eta_j\rangle^{k-2\ell}+(\eta_i\wedge\eta_j)\sum_{\ell =0}^{\lfloor\frac{k-1}{2}\rfloor}b_\ell\langle\eta_i,\eta_j\rangle^{k-1-2\ell}]\\
		&=\sum_{\ell =0}^{\lfloor\frac{k}{2}\rfloor}a_{k-2\ell}\langle x_i,x_j\rangle^{k-2\ell}|x_i|^{2\ell}|x_j|^{2\ell}+(x_i\wedge x_j)\sum_{\ell =0}^{\lfloor\frac{k-1}{2}\rfloor}b_\ell\langle x_i,x_j\rangle^{k-1-2\ell}|x_i|^{2\ell} |x_j|^{2\ell}.
	\end{align*}
	So the entries of $\chi(g)(X)$ are homogeneous polynomials of degree $2k$, and $\det(\chi(g)(X))$ is a homogeneous polynomial of degree $2k(k+1)$ with complex coefficients.
	Furthermore, $\det (\chi (g)(X))=0$ on a set of infinite measure in $({\mathbb R}^3)^{k+1}$. To see this, let 
	$$E=\{\eta=(\eta_1,\eta_2,\dots ,\eta_{k+1})\in (S^2)^{k+1}:\, \det(\chi (G)(\eta))=0\}$$
	and
	\begin{align*}
	\tilde E&=\{(x_1,x_2,\dots ,x_{k+1})\in ({\mathbb R}^3)^{k+1}:\, x_i=r_i\eta_i \\
	& \qquad\qquad\qquad\qquad\qquad
	\text{ for some }r_1,\dots ,r_{k+1}>0\text{ and }(\eta_1,\dots ,\eta_{k+1})\in E\}.		
	\end{align*}
	Then $\det(\chi (g))(X)=0$ for all $X\in \tilde E$ and with $m_{k+1}$ denoting Lebesgue measure on $({\mathbb R}^3)^{k+1}$ and $\mu_{k+1}$ denoting the product surface measure on $(S^2)^{k+1}$, we have
	\begin{align*}
		m_{k+1}(\tilde E)&=\int_{({\mathbb R}^3)^{k+1}}\chi_{\tilde E}(x_1,\dots ,x_{k+1})\, dx_1\dots dx_{k+1}\\
		&=\int_0^\infty\cdots\int_0^\infty\int_{S^2}\cdots\int_{S^2}\chi_{\tilde E}(r_1\omega_1,\dots r_{k+1}\omega_{k+1})\, d\omega_1\cdots d\omega_{k+1}\\
		&\qquad\qquad\qquad r_1^2\cdots r_{k+1}^2\, dr_1\cdots\, dr_{k+1}\\
		&=\int_0^\infty\cdots\int_0^\infty\int_{S^2}\cdots\int_{S^2}\chi_{E}(\omega_1,\dots\omega_{k+1})\, d\omega_1\cdots d\omega_{k+1}\\
		&\qquad\qquad\qquad r_1^2\cdots r_{k+1}^2\, dr_1\cdots\, dr_{k+1}\\
		&=\mu_{k+1}(E)\int_0^\infty\cdots\int_0^\infty r_1^2\cdots r_{k+1}^2\, dr_1\cdots dr_{k+1}
	\end{align*}
	which is infinite. Since $\det((\chi (g))(x_1,\dots x_{k+1})$ is a polynomial in $x_1,\dots ,x_{k+1}$, this is only possible if $\det (\chi (g))(x_1,\dots ,x_{k+1})=0$ for all $(x_1,\dots ,x_{k+1})\in ({\mathbb R}^3)^{k+1}$, or equivalently, $\det (\chi (G)(\eta))=0$ for every choice of $\eta\subset (S^2)^{k+1}$ (for the proof see \cite{caron2005zero}). This implies that $G(\eta)$ is singular for each ensemble $\eta\in (S^2)^{k+1}$. Note that $G(\eta)=H(\eta)^*H(\eta)$ where $H(\eta)\in M_{k+1}(({\mathbb R}_3)^{k+1})$ has $(\ell, j)$-th entry $H(\eta)_{\ell j}=Y_k^\ell (\eta_j)$ for any choice $\{Y_k^\ell\}_{\ell =1}^{k+1}$ of orthonormal basis for ${\mathcal M}_k^+$, since
	\begin{align*}
	(H(\eta)^*H(\eta))_{ij}&=\sum_{\ell =1}^{k+1}H(\eta)_{i\ell}^*H(\eta)_{\ell j}=\sum_{\ell =1}^{k+1}\overline{H(\eta)_{\ell i}}H(\eta)_{\ell j}\\
	&=\sum_{\ell =1}^{k+1}\overline{Y_k^\ell (\eta_i)}Y_k^\ell (\eta _j)=K_k(\eta_i,\eta_j).
	\end{align*}
	Consequently, for each ensemble $\eta$ there exists ${\mathbf 0}\neq{\mathbf a}\in({\mathbb R}_3)^{k+1}$ such that $H(\eta)^*H(\eta){\mathbf a}=0$ and therefore
	$$0=[\langle H(\eta)^*H(\eta){\mathbf a},{\mathbf a}\rangle ]_0=\|H(\eta){\mathbf a}\|^2\Rightarrow H(\eta){\mathbf a}={\mathbf 0}$$
	so that $H(\eta)$ itself is singular. 
	Since we are at liberty to choose the orthonormal basis $\{Y_k^\ell\}_{\ell =0}^{k}$ for ${\mathcal M}_k^+({\mathbb R}^3)$, we let $Y_k^\ell =F_k^\ell$ where $\{F_k^\ell\}_{\ell =0}^{k}$ is the orthonormal basis constructed in section 3. 
	We want to show that $H(\eta)^*$ is also singular. The key observation is that the functions $F_k^\ell$ take values in ${\mathbb R}_3^+$, the even sub-algebra of ${\mathbb R}_3$ which is spanned by $\{1,e_1e_2,e_2e_3,e_1e_3\}$ and which is isomorphic to the quaternions. The mapping $H(\eta)^*:({\mathbb R}_3)^{k+1}\to ({\mathbb R}_3)^{k+1}$ has invariant subspaces $({\mathbb R}_3^+)^{k+1}$ and $({\mathbb R}_3^-)^{k+1}$. Let ${\mathbf 0}\neq {\mathbf a}\in ({\mathbb R}_3)^{k+1}$ be in the nullspace of $H(\eta)$, i.e., $H{\mathbf a}={\mathbf 0}$. We decompose ${\mathbf a}$ as ${\mathbf a}={\mathbf a}^++{\mathbf a}^-$ with ${\mathbf a}^+\in ({\mathbb R}_3^+)^{k+1}$ and ${\mathbf a}^-\in({\mathbb R}_3^+)^{k+1}$ (a unique decomposition) and observe that 
	$${\mathbf 0}=H(\eta){\mathbf a}=H(\eta){\mathbf a}^++H(\eta){\mathbf a}^-$$
	from which we conclude that $H(\eta){\mathbf a}^+={\mathbf 0}=H(\eta){\mathbf a^-}$. Since $({\mathbb R}_3^+)^{k+1}$ has no zero divisors, standard results of linear algebra apply. For example, with $u,v\in({\mathbb R}_3^+)^{k+1}$ we define the quaternion-valued inner product of $u$ and $v$ by
	$\langle u,v\rangle =\sum_{j=1}^{k+1}\overline{u_j}v_j$. 
	We can perform Gram-Schmidt orthogonalisation and define the orthogonal complement $X^\perp$ of submodules $X$ of $({\mathbb R}_3^+)^{k+1}$. Following the standard proofs, we have
	$$(\text{null\,} H(\eta)^*)=(\text{ran\,}(H(\eta)))^\perp\neq\{{\mathbf 0}\},$$
	i.e., for any $\eta\in (S^2)^{k+1}$ there exists ${\mathbf 0}\neq{\mathbf b}^+\in ({\mathbb R}_3^+)^{k+1}$  such that 
	$H(\eta)^*{\mathbf b}^+={\mathbf 0}$. We'll demonstrate that there is a an ensemble $\eta\in (S^2)^{k+1}$ for which $H(\eta)$ is invertible. In fact, we'll choose $\eta$ to be $k+1$ uniformly distributed points on the equator of $S^2$, i.e., let $\eta$ be the ensemble described in spherical coordinates by
	$$\eta=\{\eta_j=(\frac{2\pi j}{k+1},\frac{\pi}{2}):\, 0\leq j\leq k\}.$$
	Let $Y_k^n=\tilde F_k^n$ and with $H(\eta)_{ij}=\tilde F_k^i(\eta_j)$ we assume $H(\eta)^*{\mathbf b}={\mathbf 0}$ for some ${\mathbf 0}\neq{\mathbf b}\in ({\mathbb R}_3)^{k+1}$. Then
	$$0=(H(\eta)^*{\mathbf b})_i=\sum_{j=0}^k\overline{h_{ji}}b_j=\sum_{j=0}^k\tilde F_k^j(\eta_i)b_j.$$
	Hence, for $0\leq\ell\leq k$,
	\begin{align*}
		0&=\sum_{i=0}^k\overline{\tilde F_k^\ell (\eta_i)}\sum_{j=0}^k\tilde F_k^j(\eta_i)b_j\\
		&=\sum_{j=0}^k\left(\sum_{i=0}^k\overline{\tilde F_k^\ell (\eta_i)}\tilde F_k^j(\eta_i)\right)b_j\\
		&=\sum_{j=0}^k\frac{1}{2^{2\ell+2}(2k+3)\pi}[(k+1-\ell )^2P_{k+1-n}^{(\ell ,\ell )}(0)^2+(k+1)^2P_{k-\ell}^{(\ell ,\ell )}(0 )^2]\delta_{\ell j}b_j\\
		&=\frac{1}{2^{2n+2}(2k+3)\pi}[(k+1-\ell )^2P_{k+1-\ell }^{(\ell ,\ell )}(0)^2+(k+1)^2P_{k-\ell }^{(\ell ,\ell )}(0 )^2]b_\ell .
	\end{align*}
	However, $(k+1-\ell )^2P_{k+1-\ell}^{(\ell ,\ell)}(0)^2+(k+1)^2P_{k-\ell}^{(\ell ,\ell)}(0 )^2\neq 0$, for all $0\leq \ell \leq k$ and we conclude that $b_\ell =0$ for all $\ell$, i.e., ${\mathbf b}=0$, a contradiction. Hence $G(\eta)$ is invertible for a.e. choice of $\eta\in (S^2)^{k+1}$.
\end{proof}

Now, the rest of the process is similar to the harmonic version. We aim to find $k+1$ random points on the sphere such that the matrix $G$ is as close as possible to a diagonal matrix. From \eqref{matrix_G_for_finding_coefficients}, we see that the objective function for the monogenic case is
\begin{align}
	F(\eta_{1},\dots, \eta_{k+1})&=\sum_{i<j}\big\vert (k+1) (C_{k}^{\frac{1}{2}}(\langle\eta_{i},\eta_{j}\rangle))+ (\eta_{i}\wedge \eta_{j}) (C_{k-1}^{\frac{3}{2}}(\langle\eta_{i},\eta_{j}\rangle))\big\vert^{2}\nonumber\\
	&=\sum_{i<j}[(k+1)^{2} C_{k}^{\frac{1}{2}}(\langle\eta_{i},\eta_{j}\rangle)^2+(1-(\langle\eta_{i},\eta_{j}\rangle)^{2}) C_{k-1}^{\frac{3}{2}}(\langle\eta_{i},\eta_{j}\rangle)^{2}],\label{Objective_Function}
\end{align}
with $\{ \eta_{i} \}_{i=1}^{k+1}$ in $S^{2}$. Now we let 
\begin{align}
	G_{l}(t)&=F_{l}(\eta_{1},\dots,(\cos t) \,\eta_{l} + (\sin t) \,\vec{w},\dots,\eta_{k+1})\nonumber\\
	&=\sum_{j\neq l}\bigg[(k+1)^{2}\bigg(  C_{k}^{\frac{1}{2}}((\cos t) \langle \eta_{l}, \eta_{j}\rangle + (\sin t) \langle   \vec{w} ,\eta_{j}\rangle)\bigg)^2\nonumber\\
	&+\bigg( 1- ((\cos t)\langle {\eta}_{l} ,\eta_{j}\rangle + (\sin t)\langle \vec{w} , \eta_{j} \rangle)^{2}\bigg)\bigg( C_{k-1}^{\frac{3}{2}}((\cos t) \langle \eta_{l}, \eta_{j}\rangle + (\sin t) \langle   \vec{w} ,\eta_{j}\rangle)\bigg)^2\bigg]. \label{Objective_Function_2nd}
\end{align}
Differentiating both sides of \eqref{Objective_Function_2nd} with respect to $t$ gives
\begin{align}
	G_{l}'(t)&=\sum_{j\neq l}\bigg[(k+1)^{2} 2 C_{k}^{\frac{1}{2}}((\cos t) \langle \eta_{l}, \eta_{j}\rangle + (\sin t) \langle   \vec{w} ,\eta_{j}\rangle) \hspace*{3cm}\nonumber\\
	&\times (C_{k}^{\frac{1}{2}})'((\cos t) \langle \eta_{l}, \eta_{j}\rangle +(\sin t) \langle   \vec{w} ,\eta_{j}\rangle)(-(\sin t) \langle \eta_{l}, \eta_{j}\rangle + (\cos t) \langle   \vec{w} ,\eta_{j}\rangle)\nonumber\\
	&-2\, (\cos t\langle {\eta}_{l} ,\eta_{j}\rangle + (\sin t)\langle \vec{w} , \eta_{j} \rangle)(-(\sin t)\langle {\eta}_{l} ,\eta_{j}\rangle + (\cos t)\langle \vec{w} , \eta_{j} \rangle)\nonumber\\
	&\times \bigg(  C_{k-1}^{\frac{3}{2}}( (\cos t) \langle \eta_{l}, \eta_{j}\rangle + (\sin t) \langle   \vec{w} ,\eta_{j}\rangle)\bigg)^2\nonumber\\
	&+(1- ( (\cos t)\langle {\eta}_{l} ,\eta_{j}\rangle + (\sin t)\langle \vec{w} , \eta_{j} \rangle)^{2}) \, 2\, C_{k-1}^{\frac{3}{2}}( (\cos t) \langle \eta_{l}, \eta_{j}\rangle + (\sin t) \langle   \vec{w} ,\eta_{j}\rangle) \hspace*{3cm}\nonumber\\
	&\times (C_{k-1}^{\frac{3}{2}})'( (\cos t) \langle \eta_{l}, \eta_{j}\rangle + (\sin t) \langle   \vec{w} ,\eta_{j}\rangle)(-(\sin t) \langle \eta_{l}, \eta_{j}\rangle +  (\cos t) \langle   \vec{w} ,\eta_{j}\rangle)\bigg].\label{derivative_of_objective_function}
\end{align}
Putting $t=0$ in \eqref{derivative_of_objective_function} gives
\begin{align*}
	G'(t)\vert_{t=0}&= 2\bigg\langle \sum_{j\neq l}\bigg[(k+1)^{2} C_{k}^{\frac{1}{2}} \langle \eta_{l} , \eta_{j} \rangle (C_{k}^{\frac{1}{2}})' \langle \eta_{l} , \eta_{j} \rangle \eta_{j}\\ 
	&-\langle \eta_{l} , \eta_{j} \rangle \eta_{j} (C_{k-1}^{\frac{3}{2}} \langle \eta_{l} , \eta_{j} \rangle)^{2}\\ 
	&+ (1-\langle \eta_{l} , \eta_{j} \rangle^{2})  C_{k-1}^{\frac{3}{2}} \langle \eta_{l} , \eta_{j}\rangle (C_{k}^{\frac{3}{2}})' (\langle \eta_{l} , \eta_{j}\rangle) \eta_{j}
	\bigg], \vec{w}  \bigg \rangle. 
\end{align*}
Let
\begin{align*}
	w&=-\sum\limits_{j\neq l}\bigg[(k+1)^{2} C_{k}^{\frac{1}{2}} \langle \eta_{l} , \eta_{j} \rangle (C_{k}^{\frac{1}{2}})' \langle \eta_{l} , \eta_{j} \rangle 
	-\langle \eta_{l} , \eta_{j} \rangle  (C_{k-1}^{\frac{3}{2}} \langle \eta_{l} , \eta_{j} \rangle)^{2}\\ 
	&\qquad + (1-\langle \eta_{l} , \eta_{j} \rangle^{2})  C_{k-1}^{\frac{3}{2}} \langle \eta_{l} , \eta_{j}\rangle (C_{k-1}^{\frac{3}{2}})' (\langle \eta_{l} , \eta_{j}\rangle) 
	\bigg]\, (-\eta_{j}+\langle \eta_{j},\eta_{l} \rangle\, \eta_{l}).
\end{align*}
Then $w$ is the direction of the steepest descent of the objective function $F$ defined in \eqref{Objective_Function} when we vary $\eta_{l}$ only.
\begin{ex}
	For $k=2$ we will obtain the $3$ monogenic homogeneous functions $Z_{1},$ $Z_2$ and, $Z_{3}$. By using gradient descent on the sphere with random starting ensemble $\eta=(\eta_1,\eta_2,\eta_3)$ we find that a collection of points at which a local minimum of the objective function occurs is 
	\begin{align}
		\eta_{1}&=(0.4407, -0.1155, 0.8902),\nonumber\\
		\eta_{2}&=(-0.3322 ,  -0.7521  ,  0.5692),\label{Figure_three_optimized_points}\\
		\eta_{3}&=(0.5407 ,  -0.2516 ,  -0.8027).\nonumber
	\end{align}
	The objective function of those points takes the value $F(\eta_{1},\eta_{2},\eta_{3})=5.3999.$ Therefore, we can calculate 
	$A=\tau^{-1}\bigg(\chi^{-1}\bigg(\big(\chi(\tau G)\big)^{-\frac{1}{2}}\bigg)\bigg)$. The entries of $A$ are:
	\begin{align*}
		a_{11}&=(0.9123),\\
		a_{21}&=(0.1933)+(-0.1599)e_{12}+(0.2363)e_{13}+(0.261)e_{23},\\
		a_{31}&=(0.1933)+(0.02093)e_{12}+(0.361)e_{13}+(-0.1369)e_{23},
	\end{align*}
	
	\begin{align*}
		a_{12}&=(0.1933)+(0.1599)e_{12}+(-0.2363)e_{13}+(-0.261)e_{23},\\
		a_{22}&=(0.9123),\\
		a_{32}&=(0.0355)+(-0.212)e_{12}+(0.01778)e_{13}+(-0.3229)e_{23},
	\end{align*}
	
	\begin{align*}
		a_{13}&=(0.1933)+(-0.0795)e_{12}+(-0.361)e_{13}+(0.1369)e_{23},\\
		a_{23}&=(0.1933)+(0.212)e_{12}+(-0.01778)e_{13}+(0.3229)e_{23},\\
		a_{33}&=(0.9123).
	\end{align*}
	Now we put $Z_{i}(x)=\sum\limits_{j=1}^{3}K_{2}(x,\eta_{j})a_{ji}$. Note that the coefficient $a_{11}$ in the expansion of $Z_{1}$ is significantly larger than $a_{21},$ and $a_{31}$. In fact with $b=[0, a_{12},a_{13}]^{T}$, we have 
	\begin{align*}
		\Vert Z_{1}- K_{2}(\cdot,\eta_{1})a_{11}\Vert_{L^{2}(S^{m-1})}^{2}&=\Vert \sum_{j=1}^{3}K_{2}(\cdot,\eta_{j})a_{j1}-K_{2}(\cdot,\eta_{1})a_{11}\Vert_{L^{2}(S^{m-1})}^{2}\\
		&=\Vert K_{2}(\cdot,\eta_{2})a_{22}+K_{2}(\cdot,\eta_{3})a_{31}\Vert_{L^{2}(S^{m-1})}^{2}\\
		&=\Vert \sum_{j=1}^{3}K_{2}(\cdot,\eta_{j})b_{j}\Vert_{L^{2}(S^{m-1})}^{2}\\
		&= \bigg(\int_{S^{m-1}}\sum_{j=1}^{3} \overline{b_{j}} K_{2}(\cdot,\eta_{j}) \sum_{l=1}^{3} K_{2}(\cdot,\eta_{l}) b_{l}\; dx  \bigg)_{0}\\
		&= \bigg(\sum_{j,l=1}^{3} \overline{b_{j}} K_{2}(\eta_{j},\eta_{l}) b_{l}  \bigg)_{0}\\
		&= \big(\langle b,Gb \rangle \big)_{0}=0.8409 ,
	\end{align*}
	we conclude that that $Z_1$ is nearly zonal. Similarly, $a_{22}$ and $a_{33}$ are also the biggest coefficients in the expansions $Z_{2}(x),$ and $Z_{3}(x)$, respectively, and we conclude that $Z_2$, $Z_3$ are also nearly zonal. Codes for the computations of this example can be found at \href{https://github.com/HamedBaghal/Spherical\_Monogenics.git}{github.com/HamedBaghal/Spherical\_Monogenics.git}.
\end{ex}

\section*{Acknowledgment}
\noindent JAH is supported by the Australian Research Council through Discovery Grant DP160101537. Hamed has also the Lift-off fellowship from AUSTMS. Thanks, Roy. Thanks, HG. Thanks, AmirHosein.

\bibliographystyle{spmpsci}
\bibliography{Constructing_3_Dimensional_Monogenic_Homogeneous_Functions} 

\end{document}